\documentclass[11pt,a4paper]{article}

\usepackage{amsmath,amssymb,latexsym}
\usepackage{mathtools}
\usepackage{graphicx}
\usepackage{epsfig}
\usepackage{longtable}
\usepackage{amsthm}
\usepackage{xcolor}
\usepackage{multirow}
\usepackage{caption}
\usepackage{subcaption}
\usepackage{ upgreek }
\usepackage{amssymb}
\usepackage{dsfont}
\usepackage{mathrsfs}

\usepackage{lineno}

\newtheorem{theorem}{Theorem}

\newtheorem{definition}{Definition}
\newtheorem{example}{Example}
\newtheorem{remark}{Remark}

\def\XX{{\mathbf X}}

\def\xx{{\mathbf x}}
\def\yy{{\mathbf y}}
\def\uu{{\mathbf u}}
\def\aa{{\mathbf a}}
\def\bb{{\mathbf b}}

\begin{document}

\title{ \Large\bf Convex lineability in copula and quasi-copula sets}
\author{Enrique de Amo$^{\rm a}$, Juan Fernández-Sánchez$^{\rm b}$, David García-Fernández$^{\rm b}$,\\ Manuel Úbeda-Flores$^{\rm a,b,}$\footnote{Corresponding author}\\
\small{$^{\rm a}$Department of Mathematics, University of Almería, 04120 Almería, Spain}\\
\small{\texttt{edeamo@ual.es,\,\,\,mubeda@ual.es}}\\
\small{$^{\rm b}$Research Group of Theory of Copulas and Applications, University of Almería, 04120 Almería, Spain}\\
\small{\texttt{juanfernandez@ual.es,\,\,\,dgf992@inlumine.ual.es}}\\
}
\maketitle

\begin{abstract}
 In this paper, we investigate several subsets of $n$-copulas and $n$-quasi-copulas from the perspective of convex-lineability and the recently introduced concept of convex-spaceability. Our purpose is to determine when such families contain extremely large algebraic structures, namely linearly independent sets of cardinality of the continuum whose convex hull, and in some cases a closed convex linearly independent subset, remain entirely inside the class under study. These include the families of asymmetric copulas, copulas with maximal asymmetric measure, and proper $n$-quasi-copulas, among others. In contrast, for several other natural classes of copulas we show that (maximal) convex lineability holds while convex spaceability remains an open problem.
\end{abstract}
\bigskip

\noindent MSC2020: 60E05, 62H05.\medskip

\noindent {\it Keywords}: copula, convex-lineability, convex-spaceability, quasi-copula.

\section{Introduction}\label{sec:copulas}

The concept of lineability appears for the first time in \cite{Se06}, although it can be found implicitly in works several decades earlier. The objective is to find vector spaces in subsets of vector spaces. In the case of copulas (\cite{Durante2016book,Ne06}), it is not possible to find these vector spaces since we cannot multiply by negative numbers. For such cases, the concept of coneability was introduced (\cite{Ai08}).  This new concept is also not suitable because we cannot multiply by different numbers from $1$ without losing the boundary conditions of copulas.

Recently, the concept of convex-lineability has been introduced in \cite{Ber19}, which proves to be appropriate within the set of copulas.  This notion detects large linearly independent families whose convex hull remains inside the nonlinear set of interest. When the cardinality of the set of linearly independent vectors equals that of the ambient vector space, we refer to the set as being maximally convex lineable, representing an optimal situation. This property reveals profound algebraic and geometric richness within sets that, at first glance, appear to lack any linear structure. Convex lineability often uncovers hidden vectorial behavior in collections of pathological, irregular, or highly nonlinear objects, such as nowhere differentiable functions, singular measures, or solutions to nonlinear differential equations.

In addition to convex lineability, in this work we make systematic use of the related concept of convex spaceability, meaning the existence inside a given nonlinear set of a closed, convex, infinite linearly independent subset. This refinement captures situations in which not only algebraic largeness but also topological completeness is preserved. For some classical classes of copulas (such as asymmetric copulas or copulas with maximal asymmetry), we prove that maximal convex lineability can in fact be strengthened to maximal spaceable convex lineability, whereas in other settings convex spaceability remains unknown.

Convex lineability and convex spaceability reveal the presence of unexpectedly large algebraic and geometric configurations inside sets that are not linear and often highly irregular. Such phenomena have been observed in sets of nowhere differentiable functions, singular measures, or solutions to nonlinear differential equations. In the context of copulas, the existence of large convex lineable or convex spaceable families enhances the understanding of dependence modeling, enriches the set of available examples for extremal behavior, and provides new tools for approximation and interpolation in probabilistic models.

In this paper, we work in the space of $n$-copulas on $[0,1]^n$, endowed with the supremum norm, and study convex lineability and convex spaceability of several natural families of copulas and quasi-copulas. Our main contributions include the construction of maximally convex lineable families in a number of important classes, and the identification of specific instances where convex spaceability can also be achieved.

The structure is as follows. In Section~\ref{sec:prem} we recall the necessary preliminaries on copula theory, quasi-copulas, ordinal sums, and the notions of convex lineability and convex spaceability. Section~\ref{sec:convex} contains the main results: we establish convex lineability and spaceability properties in a variety of relevant families, including copulas with fractal support, asymmetric copulas, copulas with maximal asymmetry, Lipschitz-type copulas, proper quasi-copulas, and sequences of copulas under different topologies. Section~\ref{sec:conc} presents the conclusions and outlines several open problems.

\section{Preliminaries}\label{sec:prem}

This section introduces the foundational mathematical concepts for our subsequent analysis. We first briefly review the core principles of Copula Theory and then formalize the notion of convex lineability, the algebraic tool central to this work.

\subsection{Copula theory}

Let $n\geq 2$ be a natural number. An $n$-dimensional copula (for short, $n$-copula), $C$, is the restriction to $[0,1]^n$ of a continuous $n$-dimensional distribution whose univariate margins are uniform on $[0,1]$. The following definition is also equivalent: 

\begin{definition}\label{def:copula}
    An $n$-copula $C$ is a function $C:[0,1]^n\longrightarrow [0,1]$ satisfying the following properties:
    \begin{itemize}
        \item [i)] $C(\uu)=0$ for every $\uu:=(u_1,u_2,\ldots,u_n)\in[0,1]^n$ such that at least one coordinate of $\uu$ is equal to $0$.
        \item [ii)] $C(\uu)=u_k$ whenever all coordinates of $\uu$ are equal to $1$ except, maybe, $u_k$.
        \item [iii)] For every $\aa=(a_1,a_2,\ldots,a_n)$ and $\bb=(b_1,b_2,\ldots,b_n)$ in $[0,1]^n$, such that $a_k\leq b_k$ for all $k=1,2,\ldots,n,$
        $$V_C(B):=\sum_{\bf c} {\rm sgn}({\bf c})C({\bf c})\geq 0,$$
        where $B$ denotes the $n$-box $[\aa,\bb]=\times_{k=1}^n[a_k,b_k]$ and the sum is taken over the vertices ${\bf c}=(c_1,c_2,\ldots,c_n)$ of $[\aa,\bb]$, that is, each $c_k$ is equal to either $a_k$ or $b_k$, and ${\rm sgn}({\bf c})=1$ if $c_k=a_k$ for an even number of $k's$, and ${\rm sgn}({\bf c})=-1$ otherwise.
    \end{itemize}
\end{definition}

We will refer to $V_C$ as the {\em volume} associated with $C$ (on $n$-boxes).

The importance of copulas lies in Sklar's theorem \cite{Sk59}, which we recall now---for a complete proof of this result, see \cite{Ub2017}.

\begin{theorem}[Sklar]
    Let $\XX=(X_1,X_2,\ldots,X_n)$ be a random $n$-vector with joint distribution function $H$ and one-dimensional marginal distributions $F_1,F_2,\ldots,F_n$. Then there exists an $n$-copula, $C$, uniquely determined on $\times_{i=1}^n Range(F_i)$, such that 
    $$H(\xx)=C(F_1(x_1), F_2(x_2),\ldots,F_n(x_n))$$
    for all $\xx\in[-\infty,+\infty]^n$. If all the marginals $F_i$ are continuous then the $n$-copula $C$ is unique.
\end{theorem}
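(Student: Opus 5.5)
We prove Sklar's theorem in two stages: first for continuous marginals, where the copula is obtained directly from $H$, and then in general, where we first build a subcopula on the product of the ranges and afterwards extend it to an $n$-copula.

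\textbf{Continuous marginals.} If every $F_i$ is continuous, then $\mathrm{Range}(F_i)\supseteq(0,1)$, and we use the quasi-inverses $F_i^{(-1)}(t)=\inf\{x: F_i(x)\ge t\}$. We define
$$C(u_1,\ldots,u_n)=H\bigl(F_1^{(-1)}(u_1),\ldots,F_n^{(-1)}(u_n)\bigr),$$
with the usual conventions at $0$ and $1$. Equivalently, $C$ is the distribution function of the vector $(F_1(X_1),\ldots,F_n(X_n))$. Each coordinate $F_i(X_i)$ is uniform on $[0,1]$ by the probability integral transform. Hence $C$ is a distribution function with uniform margins, and it satisfies conditions i)--iii) of Definition~\ref{def:copula}. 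The identity $H(\xx)=C(F_1(x_1),\ldots,F_n(x_n))$ holds because $F_i(X_i)\le F_i(x_i)$ coincides with $X_i\le x_i$ up to a null set. Uniqueness is immediate: since the ranges cover $[0,1]$, any copula representing $H$ is determined at every point by $H$.

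\textbf{General marginals: the subcopula.} We first define $C'$ on $\times_{i=1}^n\mathrm{Range}(F_i)$ (closures included) by $C'(F_1(x_1),\ldots,F_n(x_n))=H(\xx)$. To see that this is well defined, we need the Lipschitz-type bound
$$|H(\xx)-H(\yy)|\le\sum_i|F_i(x_i)-F_i(y_i)|,$$
which follows from the $n$-increasing property of $H$ and the fact that its margins are the $F_i$. So if $F_i(x_i)=F_i(y_i)$ for all $i$, then $H(\xx)=H(\yy)$. The function $C'$ inherits groundedness, the correct margins on the ranges, and nonnegative volumes on boxes whose vertices lie in the grid. It is therefore an $n$-subcopula. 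The same bound shows that $C'$ is uniformly continuous, so it extends to the closure of the grid, and this extension is exactly what is uniquely determined.

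\textbf{Extension to an $n$-copula (main obstacle).} The hard step is extending this subcopula to all of $[0,1]^n$ while keeping the extension $n$-increasing. We use multilinear interpolation. The complement of each closed range $\overline{\mathrm{Range}(F_i)}$ in $[0,1]$ is a countable union of open gaps $(a,b)$ with endpoints $a,b$ in the range. For $u$ in such a gap set $\lambda_i(u)=(u-a)/(b-a)$, and put $\lambda_i(u)=1$ if $u$ lies in the range. Then define $C$ as the convex combination of the values of $C'$ at the $2^n$ vertices of the grid box containing $\uu$, with weights $\prod_i \lambda_i^{\epsilon_i}(1-\lambda_i)^{1-\epsilon_i}$.

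Three checks remain:
\begin{itemize}
\item[(a)] \emph{Margins and groundedness.} These are preserved because the interpolation is linear in each variable separately.
\item[(b)] \emph{Volume on a box inside one grid cell.} On such a box $B$, $V_C(B)$ equals $V_{C'}$ of the cell multiplied by $\prod_i(\Delta\lambda_i)\ge0$.
\item[(c)] \emph{Volume on an arbitrary box.} Any box splits into finitely many pieces, each lying in a single cell or having grid-aligned sides, and volume is additive over such a subdivision. The grid-aligned pieces have nonnegative volume by the subcopula property, so $V_C(B)\ge0$.
\end{itemize}
Check (c) is the delicate part. Care is needed with accumulation points of the gaps, and uniform continuity of $C'$ together with a limiting argument handles them.
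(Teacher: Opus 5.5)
The paper gives no proof of its own here; it defers to the survey it cites. Your route is the classical Schweizer--Sklar/Nelsen proof found there: build the subcopula on $\times_{i=1}^n\overline{\mathrm{Range}(F_i)}$, then extend by multilinear interpolation. The continuous-marginal stage, the well-definedness of $C'$ via the Lipschitz bound, and checks (a) and (b) are fine. There is a genuine gap in (c), exactly at the step you single out as the main obstacle.

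\textbf{The decomposition claim in (c) fails.} You claim every box splits into finitely many pieces, each inside a single cell or with all sides grid-aligned. This is false for $n\ge 2$.

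Take $n=2$, a gap $(\alpha,\beta)$ of $\overline{\mathrm{Range}(F_1)}$, $F_2$ continuous, and $B=[s,\beta]\times[c,d]$ with $\alpha<s<\beta$ and $c<d$. Every subdivision of $B$ contains a piece whose first side starts at $s\notin\overline{\mathrm{Range}(F_1)}$ and whose second side is nondegenerate. Since $\mathrm{Range}(F_2)=[0,1]$, every cell is degenerate in the second coordinate, so this piece is of neither of your two types.

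In general, cut each side $[a_i,b_i]$ at the right endpoint of the gap containing $a_i$ and at the left endpoint of the gap containing $b_i$. This gives at most three pieces per coordinate, and the product pieces are typically \emph{mixed}. For $i$ in some set $G$, the side $[s_i,t_i]$ lies in the closure $[\alpha_i,\beta_i]$ of a single gap. For $i\notin G$, both $s_i$ and $t_i$ lie in $\overline{\mathrm{Range}(F_i)}$. These mixed pieces carry the real content of the extension lemma, and neither (b) nor the subcopula property covers them.

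\textbf{The fix.} The missing step is the same multilinearity computation as in (b). On a mixed piece, $C$ is affine in each $u_i$ with $i\in G$, so
\[
V_C(B)=\prod_{i\in G}\frac{t_i-s_i}{\beta_i-\alpha_i}\,V_{C'}(B^{*})\ge 0 .
\]
Here $B^{*}$ is the box with $i$-th side $[\alpha_i,\beta_i]$ for $i\in G$ and $[s_i,t_i]$ for $i\notin G$. All vertices of $B^{*}$ lie in the closed grid, so the subcopula property applies to it. This single formula contains (b) and the grid-aligned case ($G=\emptyset$) as special cases.

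Also, with this finite three-piece decomposition, no accumulation points of gaps ever enter. The limiting argument you allude to is therefore not where the difficulty lies. The only continuity you need is the extension of $C'$ to the closure of the grid, which you have already done.
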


The copula for independent random variables is the {\it product} (or {\it independence}) {\it copula}, defined as
$\Pi^n({\bf u}) := \prod_{i=1}^{n}u_i$ for all ${\bf u}$ in $[0,1]^n$.

Let $\mathcal{C}^n$ denote the set of all $n$-copulas.

Alsina {\it et al.} \cite{Alsina1993} introduced the notion of a {\it quasi-copula}---a more general concept to that of a copula---in order to characterize operations on distribution functions that can, or cannot, be derived from operations on random variables defined on the same probability space. In the last years these functions have attracted an increasing interest by researchers in some topics of fuzzy sets theory, such as preference modeling, similarities and fuzzy logics. For a complete overview of quasi-copulas, we refer to \cite{Arias2020,Sempi2017,Sempi2023}.

Cuculescu and Theodorescu \cite{Cuculescu2001} characterize a multivariate quasi-copula, or $n$-quasi-copula, ---a 2-quasi-copula was previously characterized in \cite{Ge99}--- as a function $Q\colon [0,1]^{n}\to [0,1]$ that satisfies conditions i) and ii) in Definition \ref{def:copula}, but instead of iii), the weaker conditions:
\begin{itemize}
\item[{\it iv)}] $Q$ is increasing in each variable; and
\item[{\it v)}]  the {\it Lipschitz condition}: $\vert Q(\uu)-Q({\bf v})\vert \le \sum_{i=1}^{n}\vert u_{i}-v_{i}\vert$, for any ${\bf u},{\bf v}\in [0,1]^{n}$.
\end{itemize}

Let $\mathcal{Q}^n$ denote the set of all $n$-quasi-copulas.

As with $n$-copulas, we will refer to $V_Q(B)$ as the $Q$-volume of an $n$-quasi-copula on an $n$-box $B$, and which is not necessarily positive.

Every $n$-quasi-copula $Q$ satisfies the following inequalities:
$$W^{n}(\uu):= \max\left\{\sum_{i=1}^{n}u_i-n+1,0\right\}\le Q(\uu)\leq \min\{\uu\}=:M^n(\uu)$$
for all ${\bf u}\in [0,1]^n$. $M^n$ is an $n$-copula for every $n\ge 2$, $W^{2}$ (or simply, $W$) is a 2-copula and $W^n$, for $n\ge 3$, is a {\it proper} $n$-quasi-copula, i.e., an $n$-quasi-copula but not an $n$-copula.

Let $J$ be a finite or countable subset of the natural numbers $\mathbb{N}$. Let $\{\,]a_k,b_k[\,\}_{k\in J}$ be a family of subintervals of the unit interval $[0,1]$ indexed by $J$, 
and let $\{Q_k\}_{k\in J}$ be a family of $n$-quasi-copulas also indexed by $J$. 
It is required that any two intervals $]a_k,b_k[$, $k\in J$, have at most one endpoint in common. The {\it ordinal sum} $Q$ of the family $\{Q_k\}_{k\in J}$ with respect to the family of intervals $\{\,]a_k,b_k[\,\}_{k\in J}$ 
is defined, for all $\mathbf{u}\in[0,1]^n$, by
\[
Q(\mathbf{u}) :=
\begin{cases}
a_k + (b_k - a_k)\,
Q_k\!\left(
\displaystyle
\frac{\min\{u_1,b_k\}-a_k}{b_k-a_k},
\ldots,
\frac{\min\{u_n,b_k\}-a_k}{b_k-a_k}
\right),
\\[10pt] 
\hspace{3cm} \text{if } \min\{u_1,u_2,\ldots,u_n\}\in ]a_k,b_k[ \text{ for some } k\in J,\\ 
\min\{\uu\}, \quad \text{elsewhere.}
\end{cases}
\]
The ordinal sum of $n$-quasi-copulas is again an $n$-quasi-copula for all $n\ge 2$ (see \cite{Durante2020,Mesiar2010}). For such an $n$-quasi-copula $Q$, one writes $
Q = (\langle a_k,b_k,Q_k\rangle)_{k\in J}$. Another type of ordinal sum based on $W$ was introduced in \cite{Mesiar04} as
$$
Q(u,v) = 
\begin{cases} 
(b_k - a_k) \cdot C_k \left( \displaystyle\frac{u - a_k}{b_k - a_k}, \frac{v + b_k - 1}{b_k - a_k} \right), & \text{if } (u,v) \in [a_k, b_k] \times [1 - b_k, 1 - a_k], \\ 
W(u,v), & \text{otherwise},
\end{cases}$$
and it is called $W$-{\it ordinal sum}.

\subsection{Convex-lineability}
The notion of lineability that we will explore throughout this article consists, as previously mentioned in the introduction, in identifying large algebraic structures within subsets of a given topological vector space. The origin of this concept can be traced back to the study of functions of bounded variation on the interval $[0,1]$. It was shown in \cite{Lev40} that this set does not contain any infinite-dimensional closed subspace. For a more detailed treatment of this theory, the reader is referred to \cite{Aron16}. The following definition of lineability was first introduced in \cite{Aron05}.

\begin{definition}
    A subset $M$ of a vector space $X$ is said to be lineable if $M\cup\{0\}$ contains an infinite dimensional vector space. If $X$ is, in addition, a topological vector space, then $M$ is called spaceable if $M\cup\{0\}$ contains a closed infinite dimensional vector subspace.
\end{definition}

As previously mentioned, the concept of lineability is not applicable to the set of copulas. When multiplying by negative scalars or by constants different from one, the boundary conditions that every copula must satisfy are no longer preserved. For this reason, the notion of convex lineability, introduced in \cite{Ber19}, is a much more suitable framework for the study of copulas.

\begin{definition}
    Let $M$ be a subset of a vector space $X$. Then $M$ is said to be convex lineable if there exists an infinite linearly independent subset $A\subset X$ such that  ${\rm conv}(A)\subset M$, where ${\rm conv}(A)$ denotes the convex hull of a subset $A$ of the vector space $X$, that is, 
    $${\rm conv}(A)=\left\{\sum_{i=1}^n\lambda_i x_i: x_i\in A,\hspace{0.1cm} \lambda_i\in [0,1],\hspace{0.1cm} \sum_{i=1}^n\lambda_i=1,\hspace{0.1cm} n\in\mathbb{N}\right\}.$$
When the cardinal of the linear independent vectors coincides with the cardinal of the vector space $X$, we will say that $M$ is {\it maximal convex lineable}.
\end{definition}

Based on the concept of convex lineable, we present the new concept of convex spaceable. 

\begin{definition}
    Let $M$ be a subset of a vector space $X$. Then $M$ is said to be {\it convex spaceable} if there exists a convex, closed and containing an infinite linearly independent subset $A\subset M$. We say \(M\) is \textit{maximal spaceable convex lineable} if the cardinality of $A$ equals the cardinality of \(X\).
\end{definition}

Along this paper we will denote by $\mathfrak{c}$ the cardinality of continuum, i.e., the cardinality of the set of real numbers.

\section{Convex-lineability in copula and quasi-copula sets}\label{sec:convex}

The set $\mathcal{C}^n$ plays a central role in the theory of dependence modeling among random variables. It is compact, with the supremum norm, and a convex subset of all distribution functions on $[0,1]^n$, but it is neither a vector space nor closed under arbitrary linear combinations.
Convex lineability allows us to explore the internal geometry of $\mathcal{C}^n$, revealing vast families of copulas that exhibit both algebraic and convex stability. These families can span entire subspaces (of cardinality $\mathfrak{c}$) while remaining within prescribed statistical or analytical constraints. Maximal convex lineable subsets provide a dense and structured supply of examples or test functions with fine-tuned properties. This is particularly relevant in the study of copula-based function spaces, optimization problems involving dependence structures, or the generation of basis functions for simulation algorithms. From a mathematical standpoint, knowing that a certain class of copulas contains a subspace of maximal dimension indicates that this class is not “small” or “exceptional” within $\mathcal{C}^n$, but rather large, rich, and structurally complete in a precise sense.

In practical applications—such as finance, hydrology, or insurance—copulas are used to model multivariate dependence. Convex combinations of copulas often correspond to mixtures of dependence structures, allowing for richer and more adaptable models. Working within a convex lineable family ensures that all such mixtures preserve desired properties, leading to robust modeling tools.

\begin{remark}
    We recall that the metric space $(\mathcal{C}^n,d_\infty)$—i.e., the space of $n$-dimensional copulas with the sup norm—is compact: see, e.g., \cite[Theorem 1.7.7]{Durante2016book}.
\end{remark}

We now present some of the sets of interest that will be studied throughout this work, along with certain details necessary for a clear understanding of the arguments developed later in this paper.

\subsection{Copulas with fractal support}

Following \cite{Fre05}, there exist self-similar copulas whose supports---denoted by supp---are fractals of prescribed Hausdorff dimension. We recall that the Hausdorff dimension $\dim_{\mathcal H}(E)$ of a bounded subset $E\subset\mathbb{R}^2$ is defined via Hausdorff measures (see, e.g., \cite{Falconer2014}). 

A \emph{transformation matrix} is a nonnegative $n\times n$ matrix $T$ whose entries sum to $1$ and in which no row or column is entirely zero. Each such $T$ defines a transformation $T(C)$ acting on copulas by subdividing the unit square according to the entries of $T$ and redistributing the mass of $C$ proportionally (see \cite{Fre05}). For every $T$ there exists a unique invariant copula $C_T$ satisfying $T(C_T)=C_T$, whenever at least one entry equals zero, whose support is a self-affine fractal set.

For $r\in(0,1/2)$, consider the matrices
\[
T_r=\begin{bmatrix}
r/2 & 0 & r/2\\
0 & 1-2r & 0\\
r/2 & 0 & r/2
\end{bmatrix}.
\]
The invariant copula $C_{T_r}$ has fractal support with Hausdorff dimension $s\in(1,2)$ determined by
\[
4r^{\,s}+(1-2r)^{\,s}=1.
\]

We now show that the set $S_{1,s}$ of copulas whose supports have Hausdorff dimension $s$ is maximal convex lineable. To achieve this, we will draw upon the concepts developed in \cite{Edeamo12,Fre05}.

\begin{theorem}
The set $S_{1,s}$ is maximal convex lineable.
\end{theorem}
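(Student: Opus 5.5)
Fix $s\in(1,2)$ and the unique $r\in(0,1/2)$ with $4r^{s}+(1-2r)^{s}=1$, so that ${\rm supp}(C_{T_r})$ has Hausdorff dimension $s$. We need $\mathfrak{c}$ many linearly independent copulas whose convex hull lies in $S_{1,s}$. Since the ambient space of continuous functions on $[0,1]^2$ has dimension $\mathfrak{c}$, this is maximal. The key observation is that for a finite convex combination $C=\sum_i\lambda_i C_i$ with all $\lambda_i>0$, the associated measures satisfy $\mu_C=\sum_i\lambda_i\mu_{C_i}$. Hence
\[
{\rm supp}(C)=\bigcup_i {\rm supp}(C_i),
\]
a finite union of closed sets. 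Because Hausdorff dimension is finitely stable, $\dim_{\mathcal H}(E\cup F)=\max\{\dim_{\mathcal H}E,\dim_{\mathcal H}F\}$. So any family of copulas whose supports all have dimension exactly $s$ has its convex hull inside $S_{1,s}$. Everything therefore reduces to producing $\mathfrak{c}$ linearly independent copulas, each with support of dimension $s$.

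To build them, I would use scaled copies of $C_{T_r}$ placed inside ordinal sums, as recalled in the preliminaries. For $t\in(0,1)$, let
\[
C_t=(\langle 0,t,C_{T_r}\rangle,\langle t,1,C_{T_r}\rangle),
\]
the ordinal sum with two blocks $[0,t]^2$ and $[t,1]^2$. Its support is the union of two affine (scaled) images of ${\rm supp}(C_{T_r})$. Similarity maps preserve Hausdorff dimension, so $\dim_{\mathcal H}{\rm supp}(C_t)=s$ and $C_t\in S_{1,s}$. Using two blocks that exactly fill the square avoids adding any diagonal segment, though a segment of dimension $1<s$ would be harmless anyway. An alternative is $C_t=\langle 0,t,C_{T_r}\rangle$ with $M$ elsewhere; its support is a scaled fractal plus a diagonal segment, still of dimension $s$.

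The main step, which I expect to be the real obstacle, is linear independence of $\{C_t\}_{t\in(0,1)}$. Suppose $\sum_{i=1}^m\alpha_i C_{t_i}=0$ with $t_1<\dots<t_m$. I would look for a local feature that distinguishes $C_{t_m}$ from the others near the point $(t_m,t_m)$. The points $t_i$ with $i<m$ lie strictly below $t_m$, so on a small square $[t_m-\varepsilon,t_m+\varepsilon]^2$ each $C_{t_i}$ with $i<m$ coincides with a single rescaled copy of $C_{T_r}$. For $C_{t_m}$, by contrast, that square is split into two blocks: $C_{t_m}$ has zero mass on the off-diagonal rectangles $[t_m-\varepsilon,t_m]\times[t_m,t_m+\varepsilon]$, whereas each of the other copulas gives such rectangles positive mass. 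The other copulas give positive mass because ${\rm supp}(C_{T_r})$ contains the four corner subsquares and the center, and these meet such rectangles at all small scales around any support point.

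Turning this into a linear relation requires choosing test rectangles, for instance at dyadic-type scales, on which the volumes $V_{C_{t_i}}$ for $i<m$ vary together in a controlled way while $V_{C_{t_m}}$ vanishes. That is delicate, because a combination with mixed signs could cancel. A cleaner route, which I would try first, replaces the fractal blocks by something more rigid: take
\[
C_t=(\langle 0,t,C_{T_r}\rangle,\langle t,1,M\rangle).
\]
Then $C_t$ equals $M$ on $[t,1]^2$ while being strictly below $M$ at points $(u,u)$ with $u<t$ close to $t$, since $C_{T_r}(v,v)<v$ near $v=1$. Comparing the diagonal sections $\delta_t(u)=C_t(u,u)$, the function $\delta_{t_m}$ has a kink behaviour at $u=t_m$ that none of the other $\delta_{t_i}$ share, because each of those is $u\mapsto u$ near $t_m$. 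This forces $\alpha_m=0$, and induction on $m$ finishes the argument.
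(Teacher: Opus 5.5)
Your proof is correct, and it takes a genuinely different route from the paper's. The paper keeps the support fixed. It uses the invariant copulas $C_{r,a}$ of the matrices $T_{r,a}$, $0<a<r$, which all live on the same self-similar set $S$ but with different weights. It imports from the literature the fact that their measures are pairwise mutually singular, and gets linear independence from singularity; every convex combination then has support exactly $S$. You instead fix the single copula $C_{T_r}$ and move scaled copies of it around with the ordinal sums $(\langle 0,t,C_{T_r}\rangle,\langle t,1,M\rangle)$. You replace the common-support requirement with two elementary facts: the support of a finite positive combination of measures is the union of the supports, and Hausdorff dimension is finitely stable. Independence then comes from diagonal sections rather than from singularity. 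Your route is self-contained: it needs only the dimension of one self-similar set and avoids the mutual-singularity result. It also shows that identical supports are not needed at all: any linearly independent family of $\mathfrak{c}$ copulas, each with support of dimension $s$, will do. The paper's route gives a more rigid family (pairwise singular, self-similar, every convex combination having literally the same support), at the price of that external input. When you write it up, make two points explicit. First, $C_{T_r}(v,v)<v$ on $[1-r,1)$. Put $w=1-v\le r$. Self-similarity of the top-right block gives $\mu((v,1]^2)=\frac r2\,\mu((1-w/r,1]^2)\le \frac w2$, so $v-C_{T_r}(v,v)=w-\mu((v,1]^2)\ge \frac w2$. Second, the independence step needs no ``kink''. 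On $[t_m,1]$ every $\delta_{t_i}$ is the identity, which gives $\sum_i\alpha_i=0$. On $(t_{m-1},t_m)$ the relation then collapses to $\alpha_m\bigl(\delta_{t_m}(u)-u\bigr)=0$. Since $\delta_{t_m}(u)=t_m\,C_{T_r}(u/t_m,u/t_m)<u$ for $u\in[(1-r)t_m,t_m)$, this forces $\alpha_m=0$. You can drop the two-fractal-block variant. Its heuristic is also wrong as stated: if $(t_m-t_i)/(1-t_i)=r$, the off-diagonal test rectangle of $C_{t_i}$ lands in a zero block of $T_r$ and carries no mass.
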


\begin{proof}
It suffices to consider the set of copulas described in \cite{Edeamo12}: for a fixed $r$,
$$\mathcal{C}_{r,a}:=\left\{C_{r,a}:r\in\left(0,\frac{1}{2}\right), 0<a<r\right\}\subseteq S_{1,s},$$
where $C_{r,a}$ is an invariant copula for the transformation matrix 
$$T_{r,a}=\begin{bmatrix}
r-a & 0 & a\\
0 & 1-2r & 0\\
a & 0 & r-a\\
\end{bmatrix}.$$
Its support is a fractal whose Hausdorff dimension is the unique solution $s\in(1,2)$ to the equation $4r^s+(1-2r)^s=1$, since it coincides with the support of $C_{T_r}$ (see \cite{Edeamo12}).

For fixed $r$ and distinct parameters $a_1\neq a_2$, the associated invariant probability measures $\mu_{r,a_1}$ and $\mu_{r,a_2}$ are mutually singular and (crucially) supported on the same compact fractal set (see \cite[ Proposition 3.24]{Edeamo12} and \cite{Fre05}). We prove that $\mathcal{C}_{r,a}$ is an infinite linearly independent family whose convex hull is contained in $S_{1,s}$.

For each copula $C$ denote by $\mu_C$ the Borel probability measure on $[0,1]^2$ determined by
$C(u,v)=\mu_C\big([0,u]\times[0,v]\big)$ for all $(u,v)\in[0,1]^2$. If a finite linear combination of members of $\mathcal{C}_{r,a}$ vanishes pointwise,
\[
\sum_{i=1}^n\lambda_i C_{r,a_i}\equiv 0,
\]
then, evaluating on each generating rectangle $[0,u]\times[0,v]$, we obtain
\[
\sum_{i=1}^n\lambda_i\mu_{r,a_i}\big([0,u]\times[0,v]\big)=0
\quad\text{for all }(u,v).
\]
By the uniqueness of measures determined by values on the generating $\pi$-system of rectangles---i.e., a family of sets that is closed under finite intersections---, the signed Borel measure $\sum_{i=1}^n\lambda_i\mu_{r,a_i}$ is identically zero. Now, since the measures $\mu_{r,a_i}$ are mutually singular, for each index $j$ there exists a Borel set $A_j$ with $\mu_{r,a_j}(A_j)=1$ and $\mu_{r,a_i}(A_j)=0$ for $i\neq j$. Evaluating the zero measure on $A_j$ yields
\[
0=\sum_{i=1}^n\lambda_i\mu_{r,a_i}(A_j)=\lambda_j\mu_{r,a_j}(A_j)=\lambda_j,
\]
hence $\lambda_j=0$. As this holds for every $j$, all coefficients vanish and the family $\mathcal{C}_{r,a}$ is linearly independent.

Let $C=\sum_{i=1}^n\lambda_i C_{r,a_i}$ be any convex combination of members of $\mathcal{C}_r$ (so $\lambda_i\ge0$, $\sum\lambda_i=1$). The corresponding measure is the convex combination of the measures,
\[
\mu_C=\sum_{i=1}^n\lambda_i\mu_{r,a_i},
\]
and since each $\mu_{r,a_i}$ is supported in the same fractal set $S$ we obtain $\mathrm{supp}(\mu_C)= S$. Therefore, the support of $C$ is $S$ and has Hausdorff dimension $s$; hence $C\in S_{1,s}$. This proves ${\rm conv}(\mathcal{C}_{r,a})\subset S_{1,s}$.

The parameter set $(0,r)$ has cardinality $\mathfrak{c}$ and produces $\mathfrak{c}$ linearly independent copulas in $\mathcal{C}_{r,a}$, so the convex-lineable dimension is $\mathfrak{c}$ and $S_{1,s}$ is maximal convex lineable, which completes the proof.
\end{proof}

\subsection{Asymmetric copulas}

Let $S_2$ denote the set of asymmetric $2$-copulas, i.e., the bivariate copulas that satisfy that $C(u,v)\neq C(v,u)$ for at least one $(u,v)\in[0,1]^2$.

Now we will show that the set of bivariate asymmetric copulas is maximal spaceable convex lineable.

\begin{theorem}\label{th:s2}
  The set $S_2$ is maximal spaceable convex lineable.
\end{theorem}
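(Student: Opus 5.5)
We need a closed convex set $K\subset S_2$ that contains $\mathfrak{c}$ linearly independent copulas. The cardinality of the ambient space (continuous functions on $[0,1]^2$ with the sup norm) is $\mathfrak{c}$, so this is the maximal case.

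\textbf{Construction.} We would use ordinal sums to place a fixed asymmetric block in one region and a free, linearly independent family in a disjoint region. Fix the two squares $[0,1/2]^2$ and $[1/2,1]^2$. On the first square, place a fixed asymmetric copula $A$ via the ordinal sum construction recalled in Section~\ref{sec:prem}. A concrete choice is the shuffle whose mass is uniformly distributed on three segments:
\begin{itemize}
\item from $(0,1/3)$ to $(1/3,2/3)$,
\item from $(1/3,2/3)$ to $(2/3,1)$,
\item from $(2/3,0)$ to $(1,1/3)$.
\end{itemize}
This support is not symmetric with respect to the diagonal, so $A(u,v)\neq A(v,u)$ at some point $(u_0,v_0)$. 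On the second square, place the copula $C_t$, where $\{C_t\}_{t\in[0,1]}$ is a linearly independent family of cardinality $\mathfrak{c}$. Two candidates:
\begin{itemize}
\item the singular copulas $M$-like, with mass on distinct shuffles;
\item more simply, the family $C_t=\Pi+t\,\phi$ with a fixed perturbation $\phi$.
\end{itemize}
The latter fails linear independence, so we would rather take the Fréchet/Mardia family $C_t=t M+(1-t)\Pi$? Affine families are also dependent. Instead, we take the measures $\mu_t$ of the invariant copulas $C_{r,a}$ from the proof for $S_{1,s}$ (mutually singular for distinct $a$). These are linearly independent, exactly as shown there. Define
$$D_t=(\langle 0,1/2,A\rangle,\langle 1/2,1,C_t\rangle).$$

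\textbf{Key steps.}
\begin{enumerate}
\item \emph{Linear independence.} The map $C\mapsto(\langle0,1/2,A\rangle,\langle1/2,1,C\rangle)$ is affine and injective. Suppose $\sum\lambda_i D_{t_i}=0$ with $\lambda_i$ not all zero. Evaluating on $[0,1/2]^2$ gives $(\sum\lambda_i)\,A/2$-terms, which forces $\sum\lambda_i=0$. Then, on $[1/2,1]^2$, we get $\sum\lambda_i C_{t_i}(\cdot)=0$ after rescaling, hence all $\lambda_i=0$ by independence of the $C_{t_i}$.
\item \emph{Asymmetry.} Every element of the closed convex hull $K=\overline{\mathrm{conv}}\{D_t\}$ coincides on $[0,1/2]^2$ with the rescaled $A$. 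This holds because every $D_t$ does, and the property is preserved under convex combinations and uniform limits. Hence every $D\in K$ satisfies $D(u_0/2,v_0/2)\ne D(v_0/2,u_0/2)$, so $K\subset S_2$.
\item \emph{Closedness and copula property.} $K$ is closed by definition. Since $\mathcal C^2$ is convex and compact (see the Remark above), we have $K\subset\mathcal C^2$.
\end{enumerate}
Thus $K$ is a closed convex set containing $\mathfrak{c}$ linearly independent elements, all asymmetric.

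\textbf{Main obstacle.} The delicate point is reconciling the paper's definition of convex spaceability with this construction. The definition requires the closed convex set itself to lie in $S_2$, and limits must not become symmetric. Our construction handles this via the rigid block, since symmetry is tested on a region where all members agree. I would double-check the independence argument in step 1 carefully. Actually, the block-$A$ part contributes $\sum\lambda_i$ times a fixed function, so the argument is clean.
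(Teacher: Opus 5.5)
Your argument is correct, but it takes a different route from the paper's.

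\textbf{The paper's route.} It works with an intrinsically defined closed convex set $\mathcal{C}_L$: all copulas whose support lies in $L=L_1\cup L_2$, a region disjoint from its mirror image across the diagonal. Asymmetry holds because a symmetric copula would have to put mass on the reflection of $L$. Closedness holds because zero mass on rectangles avoiding $L$ passes to uniform limits. The $\mathfrak{c}$ independent elements are the shuffles of Min $C_\theta$, $\theta\in[2/3,3/4]$, which are mutually singular because their supports are disjoint segments.

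\textbf{Your route.} You freeze an asymmetric block instead. Every $D_t=(\langle 0,1/2,A\rangle,\langle 1/2,1,C_t\rangle)$ equals $\frac12A(2u,2v)$ on $[0,1/2]^2$. So the condition $D(u_0/2,v_0/2)-D(v_0/2,u_0/2)=\frac12\bigl(A(u_0,v_0)-A(v_0,u_0)\bigr)\neq 0$ is affine and continuous in $D$. It therefore survives convex combinations and uniform limits automatically. Independence reduces, via $\sum\lambda_i=0$ read off the frozen block, to independence of the $C_t$.

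\textbf{What each buys.} Your argument is more elementary and more portable. It only uses that asymmetry is witnessed by values on a fixed region, which is the same device the paper uses for $S_2'$ via $\Phi$. It also avoids the support argument, which in the paper needs some care. For $C_{2/3}$ all the mass lies on $\partial L$, so no rectangle in $\mathrm{int}(L)$ carries positive mass. One should instead compare $\mu_C(L)$ with the $\mu_C$-mass of the reflected region. The paper's route, in turn, yields a much larger, explicitly described closed convex set and an elementary independent family.

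\textbf{Tidying your write-up.}
\begin{itemize}
\item Drop the abandoned candidates ($\Pi+t\phi$ and $tM+(1-t)\Pi$) and commit to one family. Borrowing the fractal copulas $C_{r,a}$ works, but it imports a nontrivial mutual-singularity result. Shuffles of Min with pairwise disjoint supports, such as the paper's $C_\theta$, would suffice.
\item You do not need the closed convex hull. The image of all of $\mathcal{C}^2$ under $C\mapsto(\langle 0,1/2,A\rangle,\langle 1/2,1,C\rangle)$ is already compact, convex and contained in $S_2$.
\end{itemize}
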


\begin{proof}
    Consider the set $L_1$ formed by the quadrilateral with vertices $(0,2/3),(0,3/4),(1/3,1)$ and $(1/4,1)$ and its interior; and $L_2$ the quadrilateral with vertices $(1/3,0), (1/4,0), (1,2/3)$ and $(1,3/4)$ along with its interior. We denote by $L=L_1\cup L_2$ and by $\mathcal{C}_L$ the set of 2-copulas whose support is contained in $L$. We show that every copula in $\mathcal{C}_L$ is asymmetric. Let $C$ be a copula in $\mathcal{C}_L$. Then there exists a rectangle $B\subset\text{int}(L)$ such that $V_C(B)>0$. Suppose, by contradiction, that it is symmetric. Then it must follow that $V_C(B_S)=V_C(B)>0$, where $B_S$ is the symmetric rectangle with respect to $B$; i.e., if $B=[u,u']\times[v,v']$, then $B_S=[v,v']\times[u,u']$. But $B_S\cap L=\emptyset$, since $B_S$ is contained in the quadrilateral with vertices $(2/3,0), (3/4,0), (1,1/4)$ and $(1,1/3)$. Hence, it is a contradiction and $C$ is asymmetric. Therefore $\mathcal{C}_L\subset S_2$ (see Figure \ref{fig:rectangles}).
    \begin{figure}
        \centering
        \includegraphics[width=0.35\linewidth]{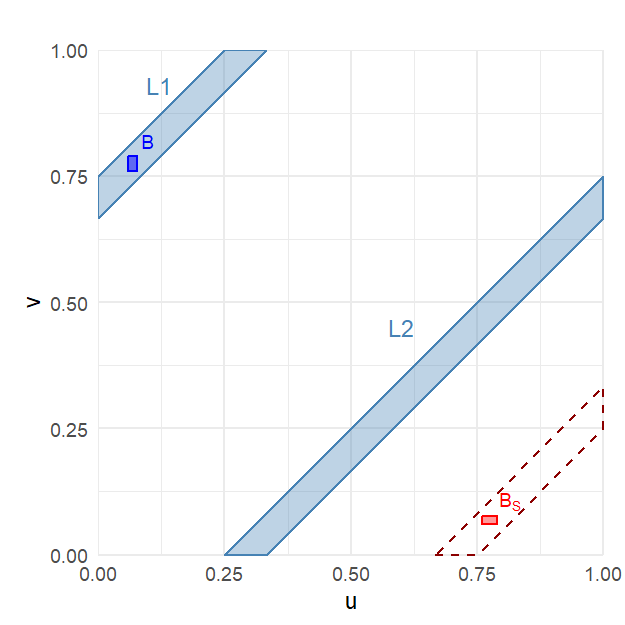}
        \caption{Situation described in the proof of Theorem \ref{th:s2}.}
        \label{fig:rectangles}
    \end{figure}

    Moreover, $\mathcal{C}_L$ is convex and closed. Firstly, we prove convexity: Let $C_1$ and $C_2$ be two copulas in $\mathcal{C}_L$, then, for $\lambda\in[0,1]$ the copula 
    $$C=\lambda C_1+(1-\lambda) C_2$$
    belongs to $\mathcal{C}_L$, since it is straightforward that $V_C=\lambda V_{C_1}+(1-\lambda)V_{C_2}$. Now, we prove that $\mathcal{C}_L$ is closed: Let $\{C_n\}_{n\in\mathbb{N}}$ be a sequence of elements in $\mathcal{C}_L$ that converges to $C$ and $B$ a rectangle such that int$(B)\cap L=\emptyset$, then $V_{C_n}(B)=0$ for all $n\in\mathbb{N}$. Thus, 
    $$V_C(B)=\lim_{n\to\infty}V_{C_n}(B)=0,$$
    that is, $C\in\mathcal{C}_L$ since its support is contained in $L$. That proves $\mathcal{C}_L$ is closed.
    
    For $\theta\in[2/3,3/4]$, let us consider the continuous family of asymmetric copulas $\mathcal{C}_\theta$ formed by
$$C_\theta(u,v)=\begin{cases}
    \min\{u,v-\theta\}, & (u,v)\in[0,1-\theta]\times[\theta,1],\\
    \min\{u+\theta-1,v\}, & (u,v)\in[1-\theta,1]\times[0,\theta],\\
    W^2(u,v), & otherwise
\end{cases}$$
(the set of their supports is $L_1\cup L_2$ shown in Figure \ref{fig:rectangles}). Each copula $C_\theta$ is a {\it shuffle of Min}  \cite{Mi1992} (or also a $W$-ordinal sum) and its probability mass is uniformly distributed on the line segments $(0,\theta), (1-\theta,1)$ and $(1-\theta,0), (1,\theta)$. Therefore, given $\theta_1,\theta_2\in[2/3,3/4]$,   $C_{\theta_1}$ and $C_{\theta_2}$ are mutually singular whenever $\theta_1\neq \theta_2$. So, the set $\{C_\theta:\theta\in[2/3,3/4]\}$ is linearly independent, and its cardinality is $\mathfrak{c}$. The support of all the copulas in that set is contained in $L$. All these results combined prove that $S_2$ is maximal spaceable convex lineable.
\end{proof}

\subsection{Copulas with maximum asymmetric measure}

According to \cite{Kle06,Ne07}, to quantify the degree of asymmetry of a 2-copula $C$, a natural and widely used measure is defined by:
$$\|d_C\|_\infty=\sup_{(u,v)\in[0,1]^2}\{ d_C(u,v)\},$$
where $d_C:[0,1]^2\longrightarrow[0,1]$ is given by $d_C(u,v)=|C(u,v)-C(v,u)|$.
This measure vanishes if, and only if, $C$ is symmetric, and reaches its maximal value on the most asymmetric copulas.

It is known that
$$\|d_C\|_\infty\leq\frac{1}{3}$$
for every copula $C$ (see \cite{Kle06,Ne07}), and equality holds whenever $C(2/3,1/3)=1/3$ and $C(1/3,2/3)=0$ or $C(1/3,2/3)=1/3$ and $C(2/3,1/3)=0$. We say that a copula $C$ has maximal asymmetry if $\|d_C\|_\infty=1/3$.

 Let $S_2'$ be the set of copulas with maximum asymmetric measure. Then we have the following result:

\begin{theorem}\label{th:S2'}
   The set $S_2'$ is maximal spaceable convex lineable.
\end{theorem}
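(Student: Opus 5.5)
The plan is to mimic the proof of Theorem~\ref{th:s2}: find a closed convex set of copulas, defined by linear (mass) constraints, that lies inside $S_2'$ and contains $\mathfrak{c}$ mutually singular copulas. The natural choice is
$$\mathcal{K}:=\{C\in\mathcal{C}^2:\ C(2/3,1/3)=1/3,\ C(1/3,2/3)=0\}.$$
By the characterization recalled before the statement, every $C\in\mathcal{K}$ satisfies $\|d_C\|_\infty\ge d_C(2/3,1/3)=1/3$. Together with the universal bound $\|d_C\|_\infty\le 1/3$ this gives $\mathcal{K}\subset S_2'$.

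The set $\mathcal{K}$ is convex and closed. Both defining conditions are affine equalities in $C$, evaluated at fixed points, so they are preserved under convex combinations. They are also preserved under uniform (even pointwise) limits. Since $\mathcal{C}^2$ is itself convex and compact (see the Remark above), $\mathcal{K}$ is a closed convex subset of $S_2'$.

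To see what the conditions mean geometrically, translate them into mass constraints.
\begin{itemize}
\item The condition $C(1/3,2/3)=0$ says that $[0,1/3]\times[0,2/3]$ carries no mass.
\item The condition $C(2/3,1/3)=1/3$ says that $[0,2/3]\times[0,1/3]$ carries mass $1/3$. Since $[0,1/3]\times[0,1/3]$ is null, the whole strip $v\in[0,1/3]$ lives in $[1/3,2/3]\times[0,1/3]$.
\end{itemize}
Hence the mass is forced to be spread as follows:
\begin{itemize}
\item $[1/3,2/3]\times[0,1/3]$ carries mass $1/3$;
\item the column $u\in[0,1/3]$ lives in $[0,1/3]\times[2/3,1]$;
\item consequently $[2/3,1]\times[1/3,2/3]$ carries mass $1/3$.
\end{itemize}
So every $C\in\mathcal{K}$ is given by three blocks of mass $1/3$ each. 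These are a copula in $[1/3,2/3]\times[0,1/3]$, a copula in $[0,1/3]\times[2/3,1]$ and a copula in $[2/3,1]\times[1/3,2/3]$, each rescaled.

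For linear independence, I would produce the family $\{C_\theta\}$ by putting, for instance, $M^2$ in two of the blocks and, in the third block, a rescaled straight shuffle of Min. In that shuffle the mass sits on two segments split at a parameter $\theta\in(0,1)$: $(t,t+1-\theta)$ for $t\in[0,\theta]$ and $(t,t-\theta)$ for $t\in[\theta,1]$. Each $C_\theta$ lies in $\mathcal{K}$, so every convex combination does too.

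For $\theta_1\ne\theta_2$ the supports of $C_{\theta_1}$ and $C_{\theta_2}$ differ on that third block. The two copulas are therefore not identical, but they are not automatically mutually singular, since they share the diagonal segments in the other blocks. Linear independence is the step I expect to be the main obstacle, for exactly this reason. I would handle it as in the first theorem, restricting the measures to the open third block. There each measure is carried by its own pair of segments, and distinct parameters give segments that meet only in finitely many points. So the restricted measures are mutually singular, and the argument with the sets $A_j$ applies to a vanishing combination $\sum\lambda_iC_{\theta_i}$: it forces all $\lambda_i=0$.

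The parameter set has cardinality $\mathfrak{c}$, which equals the cardinality of the ambient space of continuous functions. Hence $\mathcal{K}$ is a closed convex subset of $S_2'$ containing $\mathfrak{c}$ linearly independent copulas, and $S_2'$ is maximal spaceable convex lineable.
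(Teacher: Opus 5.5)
Your proof is correct, but it gets the closed convex set differently from the paper.

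\textbf{The paper's route.} The paper never defines the set by constraints. It uses the affine map $\Phi$ sending $C$ to the $W$-ordinal sum with a rescaled copy of $C$ on $[0,1/3]\times[2/3,1]$ and $M^2$ along the segment from $(1/3,0)$ to $(1,2/3)$. It then works with the image $\Phi(\mathcal{C}^2)$:
\begin{itemize}
\item it is compact, as a Lipschitz image of the compact space $\mathcal{C}^2$;
\item it is convex, because $\Phi$ is affine;
\item it lies in $S_2'$, by evaluating at $(2/3,1/3)$ and $(1/3,2/3)$;
\item it contains $\mathfrak{c}$ linearly independent copulas, because on the corner block $\Phi(C)$ is just a rescaled copy of $C$, so independence is inherited from any independent family in $\mathcal{C}^2$.
\end{itemize}

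\textbf{How your set compares.} Your $\mathcal{K}$ is the set of all copulas satisfying the two extremal point conditions. Your mass bookkeeping shows it consists of all three-block copulas. The paper's $\Phi(\mathcal{C}^2)$ is exactly the subset of $\mathcal{K}$ in which the two blocks below $v=2/3$ both carry $M^2$.

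\textbf{What each route buys.} In your approach, closedness and convexity are immediate: $\mathcal{K}$ is the compact convex set $\mathcal{C}^2$ cut by two closed affine hyperplanes given by point evaluations. There is no embedding to check, and you obtain the largest closed convex set of this one-sided type. The paper's approach gets linear independence for free and needs no explicit family. You instead have to produce the shuffles $C_\theta$. Because they share the $M^2$ blocks, you must recover independence by restricting the vanishing signed measure to the free block. That is the same ``read off the free block'' mechanism behind the paper's transfer of independence, here made explicit.

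\textbf{A small improvement.} Your supports for distinct $\theta$ are in fact disjoint, not merely meeting in finitely many points. They lie on the distinct parallel lines $v-u=1-\theta$ and $v-u=-\theta$.
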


\begin{proof}
    Given $\Phi:(\mathcal{C}^2,d_\infty)\longrightarrow(\mathcal{C}^2,d_\infty)$ as follows
    $$\Phi(C)(u,v):=\begin{cases}
        \displaystyle\frac{1}{3}C(3u,3v-2), & (u,v)\in[0,1/3]\times[2/3,1],\\
        \min\{u-1/3,v\}, & (u,v)\in[2/3,1]\times[0,2/3],\\
        W^2(u,v),& \text{otherwise}. 
    \end{cases}$$

    The mapping $\Phi$ is well defined because it is constructed as a $W$-ordinal sum, and it is an homeomorphism---since it is about a continuous map defined on a compact set $(\mathcal{C}^2,d_\infty)$ into a Hausdorff space---since 
    \begin{align*}
        d_\infty(\Phi(C_1),\Phi(C_2))&=\sup_{(u,v)\in[0,1]^2}|\Phi(C_1)(u,v)-\Phi(C_2)(u,v)|\\
        &=\sup_{(u,v)\in[0,1/3]\times[2/3,1]}\left|\frac{1}{3}C_1(3u,3v-2)-\frac{1}{3}C_2(3u,3v-2)\right|\\
        &=\frac{1}{3}\sup_{(u,v)\in[0,1]^2}|C_1(u,v)-C_2(u,v)|\\
       &=\frac{1}{3}d_\infty(C_1,C_2),
    \end{align*} 
    Hence, $\Phi$ is Lipstchiz, in particular continuous. The metric space $(\mathcal{C}^2,d_\infty)$ is compact, so $\Phi(\mathcal{C}^2)$ is compact and therefore, closed. It is clear that $\Phi(\mathcal{C}^2)\subset S_2'$, since $\Phi(C)(\frac{2}{3},\frac{1}{3})=\frac{1}{3}$ and $\Phi(C)(\frac{1}{3},\frac{2}{3})=0$, for every $C\in\mathcal{C}^2$. It is also straightforward that $\Phi(\lambda C_1+(1-\lambda)C_2)=\lambda\Phi(C_1)+(1-\lambda)\Phi(C_2)$ for $\lambda\in[0,1]$. Since the mapping $\Phi$ is a homeomorphism and preserves convex linear combinations, it follows that if $A\subset\mathcal{C}^2$ is a linearly independent family of copulas, then $\Phi(A)$ is also a linearly independent family. So $S_2'$ is maximal spaceable convex lineable.
\end{proof}

For the next result we recall that given a topological space $(X, d)$, a subset $S \subset X$ is said to be \textit{nowhere dense} in $X$ if the interior of its closure is empty.

\begin{theorem}
    The set $S'_2$ is closed and nowhere dense in $(\mathcal{C}^2,d_\infty).$
\end{theorem}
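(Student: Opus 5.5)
Closedness follows from continuity of the asymmetry functional. The map $C\mapsto \|d_C\|_\infty$ is $2$-Lipschitz on $(\mathcal{C}^2,d_\infty)$: for copulas $C,D$ and every $(u,v)$,
\[
\bigl|d_C(u,v)-d_D(u,v)\bigr|\le |C(u,v)-D(u,v)|+|C(v,u)-D(v,u)|\le 2\,d_\infty(C,D),
\]
and taking suprema gives $\bigl|\|d_C\|_\infty-\|d_D\|_\infty\bigr|\le 2d_\infty(C,D)$. Hence $S_2'=\{C:\|d_C\|_\infty=1/3\}$ is the preimage of the closed set $\{1/3\}$ under a continuous map, and is therefore closed. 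Since $S_2'$ is closed, nowhere density reduces to showing that $S_2'$ has empty interior.

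For the empty interior, fix $C\in S_2'$ and $\varepsilon>0$. I plan to perturb $C$ toward a symmetric copula: set $C_t=(1-t)C+t\Pi^2$ with $t\in(0,1)$. This is a copula by convexity of $\mathcal{C}^2$, and $d_\infty(C_t,C)\le t\,d_\infty(C,\Pi^2)\le t$. Because $\Pi^2$ is symmetric,
\[
C_t(u,v)-C_t(v,u)=(1-t)\bigl(C(u,v)-C(v,u)\bigr),
\]
so $\|d_{C_t}\|_\infty=(1-t)\|d_C\|_\infty=(1-t)/3<1/3$. Thus $C_t\notin S_2'$. Choosing $t<\varepsilon$ shows that every ball around $C$ contains a point outside $S_2'$, so $S_2'$ has empty interior.

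Each step uses only the bound $\|d_C\|_\infty\le 1/3$ recalled above and convexity of $\mathcal{C}^2$. The only step needing care is the Lipschitz estimate for $\|d_\cdot\|_\infty$, and it is routine. If one prefers, closedness can instead be argued sequentially: if $C_n\to C$ with $\|d_{C_n}\|_\infty=1/3$, uniform convergence passes through the supremum. Nowhere density then follows because $\overline{S_2'}=S_2'$ has empty interior.
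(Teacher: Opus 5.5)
Your proof is correct and follows the paper's outline. Closedness uses the same $2$-Lipschitz estimate for $C\mapsto\|d_C\|_\infty$, and the empty interior uses the same perturbation $(1-t)C+t\Pi^2$ (the paper takes $t=1/n$).

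The one real difference is how you show that the perturbed copula leaves $S_2'$. The paper notes that these copulas do not vanish at $(1/3,2/3)$. That tacitly uses the converse of the equality condition it recalls: $\|d_C\|_\infty=1/3$ forces $\{C(1/3,2/3),C(2/3,1/3)\}=\{0,1/3\}$. The converse is true, since it follows from the pointwise bound $|C(u,v)-C(v,u)|\le\min\{u,v,1-u,1-v,|u-v|\}$, whose only maximizers are $(1/3,2/3)$ and $(2/3,1/3)$. However, the paper states it only as a sufficient condition.

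Your identity $d_{C_t}=(1-t)\,d_C$, which comes from the symmetry of $\Pi^2$, gives $\|d_{C_t}\|_\infty=(1-t)/3<1/3$ directly. So you need no information about where the supremum is attained, and your argument works unchanged with any symmetric copula in place of $\Pi^2$.
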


\begin{proof}
    Now we prove that $S_2'$ is closed. Define the map $\Phi:\mathcal{C}^2\longrightarrow\mathbb{R}$ as follows 
    $$\Phi(C):=\|d_C\|_\infty=\sup_{(u,v)\in[0,1]^2}|C(u,v)-C(v,u)|.$$
    We claim $\Phi$ is continuous with respect to the sup norm $d_\infty$. Indeed, for any two copulas, $C$ and $C'$, and any $(u,v),$
    $$\big||C(u,v)-C(v,u)|-|C'(u,v)-C'(v,u)|\big|\leq |C(u,v)-C'(u,v)|+|C(v,u)-C'(v,u)|\leq 2\|C-C'\|_\infty.$$
    Taking the supremum over $(u,v)$ yields
    $$|\Phi(C)-\Phi(C')|\leq2\|C-C'\|_\infty,$$
    hence $\Phi$ is Lipschitz (in particular, continuous). Therefore, the level set 
    $$S_2'=\Phi^{-1}(\{1/3\})$$
    is closed in $\mathcal{C}^2$ since it is the preimage of the closed singleton $\{1/3\}$ by the continuous map $\Phi$.  

    Finally, we show that $S_2'$ has no interior points. Suppose that $C$ is an interior point; then there exists a value $n_0$ such that $\frac{n-1}{n}C+\frac{1}{n}\Pi^2\in S_2'$ for $n>n_0;$ since $d_\infty(C,\frac{n-1}{n}C+\frac{1}{n}\Pi^2)=\frac{1}{n}d_\infty(C,\Pi^2)$. But this is not true because none of these copulas vanish on $(\frac{1}{3},\frac{2}{3})$. As $S_2'$ is closed with no interior points, it is nowhere dense.
\end{proof}

\subsection{$1$-$p$-Lipschitz copulas that are not $1$-$p'$-Lipschitz for any $p'>p$}

Let $C$ be a 2-copula satisfying
$$|C(\xx)-C(\yy)|\leq \|\xx-\yy\|_p,$$
for any $\xx=(x_1,x_2)$, $\yy=(y_1,y_2)\in[0,1]^2$, where $\|\xx\|_p=(\sum_{i=1}^2|x_i|^p)^{1/p}$. For a comprehensive study of this type of copulas and their properties, the reader is referred to \cite{DeB10}. Note that $(\mathcal{C}_p)_{p\in[1,\infty]}$ yields a partition of the class of all bivariate copulas.

For a fixed $p$, let $S_3$ be the set $\mathscr{C}_p$ of $1$-$p$-Lipschitz copulas that are not $1$-$p'$-Lipschitz for any $p'>p$. By using the concepts presented in \cite{DeB10}, we will demonstrate that the set $S_3$ is maximal convex lineable.

Recall that a family $\{K_\alpha:\alpha\in[0,1]\}$ of infinite subsets of $\mathbb{N}$ is said to be {\it almost disjoint} if $K_\alpha\cap K_\beta$ is finite whenever $\alpha\neq \beta$.

Consider two one-parameter families of 2-copulas defined in \cite{DeB10}; on the one hand, $C_{(c)}:[0,1]^2\longrightarrow[0,1]$, given by
$$C_{(c)}(u,v)=\min\left\{u,v,\max\left\{\frac{u+v-c}{2-c},0\right\}\right\}$$
for all $(u,v)\in[0,1]^2$, with $c\in[0,1]$, whose support is illustrated in Figure \ref{fig:sop C_c}.
\begin{figure}[h!]
    \centering
    \includegraphics[width=0.4\linewidth]{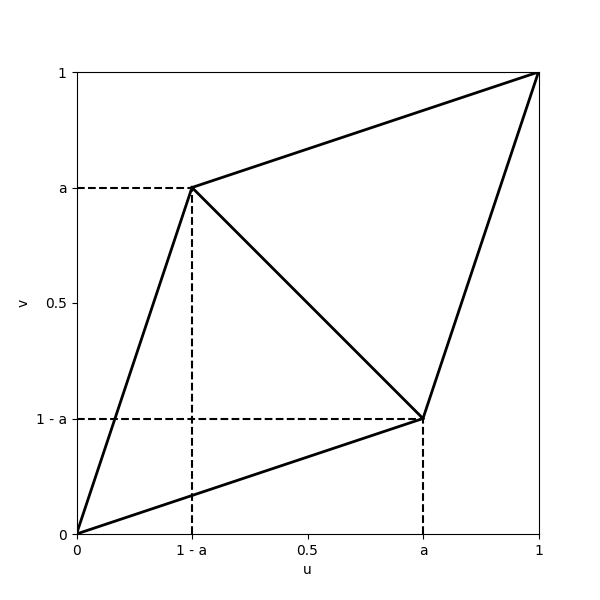}
    \caption{Support of the copula $C_{(c)}$.}
    \label{fig:sop C_c}
\end{figure}

On the other hand, $D_{(a)}:[0,1]^2\longrightarrow [0,1]$, given by
$$D_{(a)}(u,v)=\begin{cases}
    \min\{u,v,(1-a)(u+v)\}, & \text{ if } u+v\leq 1,\\
    \min\{u,v,a(u+v)+1-2a\}, &\text{ if } u+v\geq 1,
\end{cases}$$
for all $(u,v)\in[0,1]^2$ and $a \in[1/2,1]$, whose support is illustrated in 
Figure \ref{fig:sop D_a}.
\begin{figure}[h!]
    \centering
    \includegraphics[width=0.4\linewidth]{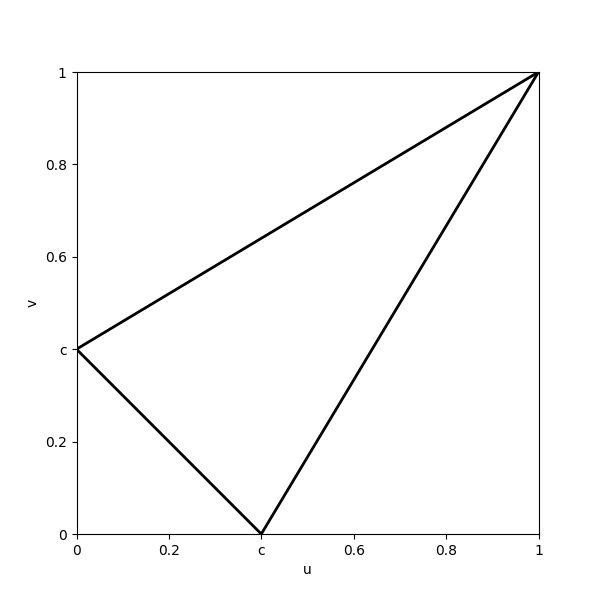}
    \caption{Support of the copula $D_{(a)}$.}
    \label{fig:sop D_a}
\end{figure}

Now we are in position to prove the following result:

\begin{theorem}\label{th:S3}
   The set $S_3$ is maximal convex lineable.
\end{theorem}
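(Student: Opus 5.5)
The plan is to build the family as ordinal sums in which every member has one fixed block where the $p$-Lipschitz bound is attained exactly. Every other block varies with a parameter and keeps strict slack. I will use the gradient characterization from \cite{DeB10}. Since a $2$-copula is Lipschitz, hence a.e. differentiable, it is $1$-$p$-Lipschitz if and only if $\|\nabla C(u,v)\|_q\le 1$ for almost every $(u,v)$, where $1/p+1/q=1$. Indeed, integrating along segments and applying Hölder's inequality gives $|C(\xx)-C(\yy)|\le \sup\|\nabla C\|_q\,\|\xx-\yy\|_p$, and the converse follows by letting $\yy\to\xx$ along directions that realize the dual norm.

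\textbf{Step 1: the fixed block.} On the middle region of $C_{(c)}$ the gradient is $\bigl(\tfrac1{2-c},\tfrac1{2-c}\bigr)$. Elsewhere it is $(1,0)$, $(0,1)$ or $(0,0)$. Hence $C_{(c)}$ is $1$-$p$-Lipschitz exactly when $2(2-c)^{-q}\le1$. Given $p$ (hence $q$), I choose $c$ with $(2-c)^q=2$, so equality holds on a set of positive measure. Then $C_{(c)}$ is $1$-$p$-Lipschitz. It is not $1$-$p'$-Lipschitz for $p'>p$, because then $q'<q$ and $2^{1/q'}(2-c)^{-1}>1$. The extreme cases $p\in\{1,\infty\}$ would be handled separately, for instance with $M^2$ or $W^2$ pieces, which realize the endpoint behaviour.

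\textbf{Step 2: the varying blocks.} On the two triangles of $D_{(a)}$ the gradients are $(1-a,1-a)$ and $(a,a)$, besides $(1,0)$, $(0,1)$ and $(0,0)$. For $a\in\bigl(\tfrac12,2^{-1/q}\bigr)$, a nondegenerate interval of cardinality $\mathfrak c$ whenever $q<\infty$, both $2^{1/q}a$ and $2^{1/q}(1-a)$ are strictly less than $1$. So these copulas are $1$-$p$-Lipschitz with strict slack. For each such $a$ I set
\[
E_a=(\langle 0,\tfrac12,C_{(c)}\rangle,\langle \tfrac12,1,D_{(a)}\rangle).
\]
An ordinal sum only rescales each component into a square along the diagonal. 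Therefore the gradient of $E_a$ is, a.e., either the gradient of a component at the rescaled point, or one of $(1,0)$, $(0,1)$, $(0,0)$. Take any convex combination $E=\sum\lambda_iE_{a_i}$. Then $\nabla E=\sum\lambda_i\nabla E_{a_i}$, so convexity of the $q$-norm gives $\|\nabla E\|_q\le 1$ a.e. Moreover, on the scaled middle region of the first block every $E_{a_i}$ has the same gradient $\bigl(\tfrac1{2-c},\tfrac1{2-c}\bigr)$. Hence $E$ keeps this gradient there, and by Step 1 it is not $1$-$p'$-Lipschitz for any $p'>p$. Thus ${\rm conv}\{E_a\}\subset S_3$.

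\textbf{Step 3: linear independence.} The singular parts of the $D_{(a)}$ sit on segments from the corners to points on the antidiagonal. The location of these segments depends on $a$, since the level structure $\min\{u,v,(1-a)(u+v)\}$ changes. It follows that the singular components of $E_{a}$ for distinct $a$ are carried by pairwise disjoint segments in the second block. This gives mutual singularity of the non-common parts. As in the proof for $S_{1,s}$, a vanishing finite combination $\sum\lambda_iE_{a_i}=0$ gives a zero signed measure. Subtract the common first block, which contributes $\sum\lambda_i$ times a fixed measure, and evaluate on the segment carrying the mass of $D_{(a_j)}$. This yields $\lambda_j=0$ for every $j$.

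The main obstacle is this independence step. One must check from the explicit form of $D_{(a)}$ that its mass on the segments has positive weight that genuinely moves with $a$, and that the common $C_{(c)}$ block does not create a dependence. If that verification is awkward, I would replace the parameter family by one that is manifestly singular. A natural choice is shuffles of $M^2$ placed in the second block, or rescaled copies of $C_{(c')}$ with $2(2-c')^{-q}<1$ and $c'$ varying, whose singular segments $u+v=c'$ are clearly distinct.
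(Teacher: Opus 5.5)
Your construction is correct for $1\le p<\infty$, but it takes a genuinely different route from the paper.

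\textbf{Comparison with the paper.} The paper uses only the two fixed copulas $C_{(c)}$ and $D_{(a)}$. It places them into the infinitely many blocks $[\frac{1}{n+1},\frac{1}{n}]$ of an ordinal sum, choosing $C_{(c)}$ or $D_{(a)}$ in block $n$ according to whether $n\in K_\alpha$, for an almost disjoint family $\{K_\alpha\}$.
\begin{itemize}
\item The continuum of independent elements comes from combinatorics. A block where only one $K_{\alpha_j}$ is active forces $\beta_j=0$, and this uses nothing beyond $C_{(c)}\neq D_{(a)}$.
\item The fact that convex combinations stay in $\mathscr{C}_p$ is delegated to \cite[Example 4.2, Proposition 4.4]{DeB10}. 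Every $\lambda C_{(c)}+(1-\lambda)D_{(a)}$ lies in $\mathscr{C}_p$, and $\mathscr{C}_p$ is stable under ordinal sums.
\end{itemize}
You instead use a two-block ordinal sum with a continuous parameter and prove membership from scratch via the gradient criterion $\|\nabla C\|_q\le 1$ a.e. The $1$-$p$-Lipschitz part is convex anyway. The delicate point is that averaging must not improve the exponent, and your common anchor block handles it: there $\|\nabla\|_q=1$ exactly and the gradient is identical for all members. This makes the argument self-contained and flexible, since any family of $1$-$p$-Lipschitz copulas could occupy the second block. The price is a measure-theoretic independence argument, like the one for $S_{1,s}$, in place of the paper's purely combinatorial one.

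\textbf{First correction (Step 3).} The singular supports of the $D_{(a)}$ are not pairwise disjoint. For $a>1/2$, $D_{(a)}$ puts mass $(2a-1)^2$ on the antidiagonal piece between $(1-a,a)$ and $(a,1-a)$, and these pieces are nested as $a$ varies. The argument survives if you evaluate the vanishing signed measure on a corner segment instead. Take, for instance, the open segment from $(0,0)$ to $(1-a,a)$, rescaled into $[1/2,1]^2$. It carries mass $a(1-a)>0$ for $D_{(a)}$, since the conditional law of $V$ given $U=u<1-a$ puts weight $a$ there. For $a'\ne a$ in $(1/2,1)$ it meets the support of $D_{(a')}$ in at most finitely many points. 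It also avoids the first block.

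\textbf{Second correction (endpoint cases).}
\begin{itemize}
\item Writing your choice as $c=2-2^{1/q}$, the case $p=1$ needs no separate treatment. Then $c=1$ and $C_{(1)}=W$, with gradient $(1,1)$ on the upper triangle.
\item Your interval $(1/2,2^{-1/q})$ is nondegenerate exactly when $q>1$, not when $q<\infty$.
\item For $p=\infty$ nothing can be done. A $1$-$\infty$-Lipschitz copula satisfies $1-C(t,t)\le 1-t$, hence $C(t,t)=t$ and $C=M$. So $S_3=\{M\}$, and the statement must be read for $p<\infty$. The paper's proof has the same implicit restriction, since for $p=\infty$ the admissible $C_{(c)}$ and $D_{(a)}$ both collapse to $M$.
\end{itemize}
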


\begin{proof}
    Let $\{0,1\}^\mathbb{N}$ be the set of sequences whose components are $0$ or $1$. Let $\{K_\alpha:\alpha\in[0,1]\}$ an almost disjoint family of infinite subsets of $\mathbb{N}$ such that $K_\alpha(n)=1$ if $n\in K_\alpha$ and $K_\alpha(n)=0$ if $n\notin K_\alpha$. 

    Consider $C^{(\lambda)}=\lambda C_{(c)}+(1-\lambda)D_{(a)}$ for $\lambda\in[0,1]$. According to \cite[Example 4.2]{DeB10}, for suitable values of $a$ and $c$, the copulas $C_{(c)}, D_{(a)}$ and $C^{(\lambda)}$ are elements of $\mathscr{C}_p$.

    Now, we proceed to construct the ordinal sum $A_\alpha=\left(\left<\frac{1}{n+1},\frac{1}{n},C^{(K_\alpha)(n)}\right>\right)_{n\in\mathbb{N}}$. Suppose that these copulas are not linearly independent, then there exists a linear combination such that
    $$\sum_{i=1}^k\beta_iA_{\alpha_i}\equiv 0,$$
    with $\beta_i\in\mathbb{R}$ for all $i=1,\ldots,k$. There are infinitely many values of $n$ satisfying $K_{\alpha_1}(n)=1$ and $K_{\alpha_i}(n)=0$ for $i=2,\ldots,k$. It would then follow that $D_{(a)} = \beta\cdot C_{(c)}$ for a given $\beta$, which is impossible since the two copulas are distinct. Consequently, there exist $\mathfrak{c}$ linearly independent copulas. \cite[Proposition 4.4]{DeB10} guarantees that any convex combination of the copulas $A_\alpha$ belongs to $\mathscr{C_p}$. So, $S_3$ is maximal convex lineable.
\end{proof}

\subsection{Proper $n$-quasi-copulas}

Let $S_4$ be the set $\mathcal{Q}^n\setminus \mathcal{C}^n$. In this section, we investigate the internal algebraic and topological richness of the class $S_4$. Our primary objective is to determine the extent to which these set and that of proper 2-quasi-copulas with a given diagonal section are maximal spaceable convex lineable.

\begin{theorem}\label{th:S4}
  The set $S_4$ is maximal spaceable convex lineable.
\end{theorem}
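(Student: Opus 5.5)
We mimic the strategy of Theorem~\ref{th:S2'}: build an affine, injective, Lipschitz map $\Psi$ from the compact convex set $\mathcal{C}^n$ (or $\mathcal{Q}^n$) into $S_4$. Its image is then automatically compact, hence closed, convex, and contains $\mathfrak{c}$ linearly independent elements.

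\emph{Construction of $\Psi$.} Fix a proper $n$-quasi-copula $Q_0$; for $n\ge3$ take $Q_0=W^n$, and for $n=2$ take a standard proper $2$-quasi-copula, e.g.\ one with negative mass $-1/3$ on a central square. Define
\[
\Psi(C):=\left(\left\langle 0,\tfrac12,Q_0\right\rangle,\left\langle \tfrac12,1,C\right\rangle\right).
\]
This is an ordinal sum of quasi-copulas, so it is an $n$-quasi-copula.

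\emph{Properness.} Choose a box $B\subset[0,1]^n$ with $V_{Q_0}(B)<0$. Its image $\tfrac12 B\subset[0,\tfrac12]^n$ satisfies
\[
V_{\Psi(C)}\left(\tfrac12 B\right)=\tfrac12\,V_{Q_0}(B)<0,
\]
because inside $[0,\tfrac12]^n$ the ordinal sum equals the rescaled $Q_0$. Hence $\Psi(C)\notin\mathcal{C}^n$, i.e.\ $\Psi(C)\in S_4$.

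\emph{Affinity, injectivity and continuity.} On each region, $\Psi(C)$ is either independent of $C$ or an affine expression in a rescaled copy of $C$. The region selection depends only on $\min\{\uu\}$, not on $C$. Therefore $\Psi$ preserves convex combinations, exactly as $\Phi$ did in Theorem~\ref{th:S2'}. Moreover,
\[
d_\infty(\Psi(C_1),\Psi(C_2))=\tfrac12 d_\infty(C_1,C_2).
\]
One inequality comes from restricting to $[\tfrac12,1]^n$. For the other, note that outside that cube with $\min\{\uu\}>\tfrac12$, the argument $(\min\{u_i,1\}-\tfrac12)/\tfrac12$ still ranges over $[0,1]^n$. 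So $\Psi$ is an isometry up to the factor $\tfrac12$, in particular continuous and injective.

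Since $\mathcal{C}^n$ is compact (Remark above), $\Psi(\mathcal{C}^n)$ is compact, hence closed, and it is convex and contained in $S_4$.

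\emph{Linear independence.} The key point is that $\Psi$ is affine, not linear. If $\sum\beta_i\Psi(C_i)\equiv0$, evaluate at points of $[\tfrac12,1]^n$:
\[
\sum\beta_i\left(\tfrac12+\tfrac12 C_i(\cdot)\right)=0.
\]
Evaluating at $\uu$ with a zero coordinate in the rescaled variables gives $\sum\beta_i=0$, and then $\sum\beta_i C_i\equiv0$. Hence linear independence of $\{C_i\}$ transfers to $\{\Psi(C_i)\}$.

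\emph{Supply of $\mathfrak{c}$ independent copulas.} Take mutually singular copulas, e.g.\ the $C_\theta$ of Theorem~\ref{th:s2} for $n=2$. For general $n$, take analogous shuffles of $M^n$, or products $C_\theta(u_1,u_2)\,\Pi^{n-2}(u_3,\ldots,u_n)$, which remain mutually singular. Their independence follows from the measure argument used for $S_{1,s}$. This yields $\mathfrak{c}$ linearly independent elements inside the closed convex set $\Psi(\mathcal{C}^n)\subset S_4$. Since $\mathfrak{c}$ is the cardinality of the ambient space of continuous functions, $S_4$ is maximal spaceable convex lineable.

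\emph{Main obstacle.} The delicate step is the negativity of the volume of $\Psi(C)$ on $\tfrac12B$ in dimension $n$. For $n\ge3$ one needs an explicit box with $V_{W^n}(B)<0$: for instance $B=[\tfrac12,1]^3$ in the three-dimensional case, with $V_{W^3}=1-3\cdot\tfrac12=-\tfrac12$. One must also check that this box, after rescaling, lies where the ordinal sum is given by the $Q_0$ formula, which requires $\min\{\uu\}\in\,]0,\tfrac12[$ at the vertices up to boundary.
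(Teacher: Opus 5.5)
Your proof is correct, and it takes a different route from the paper's.

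\emph{The paper's route.} It first treats $n=2$ with an explicit affine map $C\mapsto Q_C$. There a rescaled copy of $C$ enters with a \emph{negative} sign on the central square $[1/3,2/3]^2$, so that $V_{Q_C}([1/3,2/3]^2)=-1/3$ for every $C$. It then lifts to dimension $n$ via $Q\mapsto Q(u_1,u_2)\,u_3\cdots u_n$, the negative box becoming $[1/3,2/3]^2\times[0,1]^{n-2}$.

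\emph{Your route.} You use a single ordinal sum in every dimension. The negative mass is carried by a fixed proper block $Q_0$ on $[0,1/2]^n$, and the free copula $C\in\mathcal{C}^n$ sits, with positive sign, on $[1/2,1]^n$. This buys three things:
\begin{itemize}
\item properness of $\Psi(C)$ does not depend on $C$ at all;
\item the quasi-copula property follows from the closure of quasi-copulas under ordinal sums, recalled in the preliminaries, instead of checking an ad hoc piecewise formula and then the product lift;
\item there is no two-step reduction from $n=2$ to general $n$.
\end{itemize}
You also supply a step the paper only asserts: since the embedding is affine rather than linear, you first force $\sum_i\beta_i=0$ by evaluating where all images coincide, and only then use injectivity of the linear part. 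The paper's route, in exchange, gives fully explicit formulas for the elements of the closed convex set.

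Two small corrections.
\begin{itemize}
\item In your Lipschitz estimate, the set ``outside $[1/2,1]^n$ with $\min\{\mathbf u\}>1/2$'' is empty. What you actually need is that $\Psi(C_1)$ and $\Psi(C_2)$ coincide wherever $\min\{\mathbf u\}\le 1/2$, so the supremum is taken over $]1/2,1]^n$ only.
\item Your ``main obstacle'' is not really one. On all of $[0,1/2]^n$, boundary included, one has $\Psi(C)(\mathbf u)=\tfrac12 Q_0(2\mathbf u)$. Moreover, for every $n\ge 3$ the box $[1/2,1]^n$ gives $V_{W^n}([1/2,1]^n)=1-n/2<0$, not just for $n=3$.
\end{itemize}
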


\begin{proof}
We start studynig the case $n=2$. Given $C\in\mathcal{C}^2$, let $Q_C:[0,1]^2\longrightarrow [0,1]$ be the function defined by 
\begin{equation}\label{eq:QC}
Q_C(u,v)=\begin{cases}
    u+v-\displaystyle\frac{1}{3}C(3u-1,3v-1)-\frac{2}{3}, & (u,v)\in[1/3,2/3]^2,\\
    \max\left\{0,u-\displaystyle\frac{1}{3}, v-\frac{1}{3}, u+v-1\right\}, & (u,v)\notin[1/3,2/3]^2 \text{ and } |u-v|<1/3,\\
    \min\{u,v\}, & \text{ otherwise}.
\end{cases}
\end{equation}
$Q_C$ is indeed a proper quasi-copula, since $V_{Q_C}([1/3,2/3]^2)=-1/3$ (Figure \ref{fig:Supp Q_C} shows its support for $C=M^2$).
\begin{figure}[h!]
    \centering
    \includegraphics[width=0.4\linewidth]{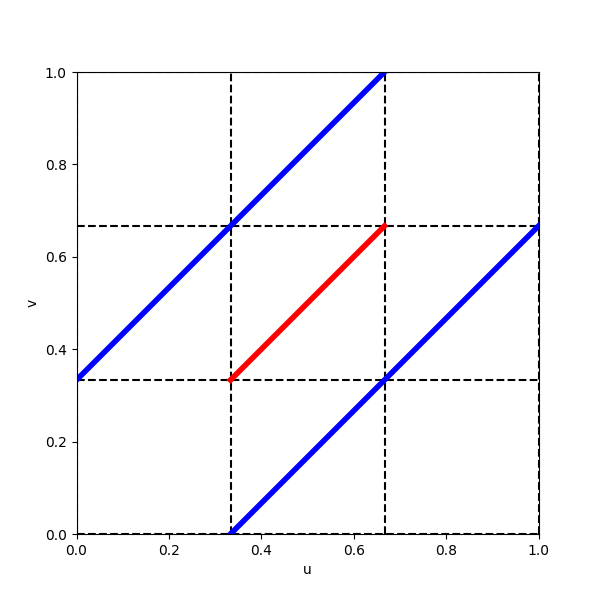}
    \caption{Support of the proper quasi-copula $Q_C$ given by \eqref{eq:QC} for $C=M^2$, with red indicating negative probability mass and blue indicating positive probability mass.}
    \label{fig:Supp Q_C}
\end{figure}

The mapping 
\[
\begin{split}
 \Phi:\mathcal{C}^2&\longrightarrow \mathcal{Q}^2 \\
 C&\longmapsto Q_C
\end{split}
\]
is continuous since, given $C_1,C_2\in\mathcal{C}^2$,

\begin{align*}
d_\infty(\Phi(C_1),\Phi(C_2))&=\sup_{(u,v)\in[0,1]^2}|Q_{C_1}(u,v)-Q_{C_2}(u,v)|\\
    &=\sup_{(u,v)\in[1/3,2/3]^2}\left|\frac{1}{3}\left(C_2(3u-1,3v-1)-C_1(3u-1,3v-1)\right)\right|\\
    &=\frac{1}{3}\sup_{(u,v)\in[0,1]^2}|C_2(u,v)-C_1(u,v)|\\
    &=\frac{1}{3}d_\infty(C_1,C_2).
\end{align*}

The metric space $(\mathcal{C}^2,d_\infty)$ is compact, so $\Phi(\mathcal{C}^2)$ is compact and therefore, closed. Note also that the image $\Phi(\mathcal{C}^2)$ is convex since it is straightforward that $\Phi(aC_1+(1-a)C_2)=a\Phi(C_1)+(1-a)\Phi(C_2)$ for all $a\in[0,1]$. Due to this last property, the mapping $\Phi$ sends linearly independent convex combinations of vectors to linearly independent convex combinations of vectors. Thus, $\mathcal{Q}^2\setminus\mathcal{C}^2$ is maximal spaceable convex lineable.

In order to prove the general case $n$, we define the continuous map 
\[
\begin{split}
 \Psi:\mathcal{Q}^2&\longrightarrow \mathcal{Q}^n \\
 Q&\longmapsto Q'
\end{split}
\]
where $Q'(u_1,u_2,\ldots,u_n)=Q(u_1,u_2)u_3\cdots u_n$. It is clear that $\Psi(\Phi(C))$ is a proper quasi-copula and the map $\Psi$ is continuous since
\begin{align*}
    d_\infty(\Psi(Q_1),\Psi(Q_2))&=\sup_{\uu\in[0,1]^n}|Q_1'(\uu)-Q_2'(\uu)|\\
    &=\sup_{\uu\in[0,1]^n}|Q_1(u_1,u_2)u_3\cdots u_n-Q_2(u_1,u_2)u_3\cdots u_n|\\
    &=\sup_{\uu\in[0,1]^n}|(Q_1(u_1,u_2)-Q_2(u_1,u_2)|\cdot|u_3\cdots u_n|\\
    &=\sup_{(u_1,u_2)\in[0,1]^2}|Q_1(u_1,u_2)-Q_2(u_1,u_2)|\\
    &=d_\infty(Q_1,Q_2).
\end{align*}
It is straightforward that $\Psi \left( \Phi \left( \mathcal{C}^{2}\right) \right)$ is convex and that $\Psi$ preserves convex combinations of linearly independent vectors. Since $\Psi \left( \Phi \left( \mathcal{C}^{2}\right) \right) \subset \mathcal{Q}
^{n}\setminus \mathcal{C}^{n}$, we have that $\mathcal{Q}^n\setminus\mathcal{C}^n$ is maximal spaceable convex lineable.
\end{proof}

\subsubsection{Proper $2$-quasi-copulas with given diagonal section}

We start this subsection with the definition of diagonal.

\begin{definition}
A mapping $\delta:[0,1]\longrightarrow[0,1]$ is said to be a \emph{diagonal} if it satisfies:
\begin{itemize}
    \item[i.] $\delta(1)=1$;
    \item[ii.] $\delta(t)\le t$ for all $t\in[0,1]$;
    \item[iii.] $\delta$ is increasing;
    \item[iv.] $|\delta(v)-\delta(u)|\le 2|v-u|$ for all $u,v\in[0,1]$.
\end{itemize}
\end{definition}

The diagonal section of any 2-quasi-copula $Q$ is given by $\delta_Q(t)=Q(t,t)$ for all $t\in[0,1]$.

Let $\mathcal{C}_\delta^2$ (respectively, $\mathcal{Q}_\delta^2$) denote the set of all 2-copulas (respectively, 2-quasi-copulas) having diagonal section $\delta$.
It is known (see \cite{Fre96,Ne96}) that for each diagonal $\delta$ there exists at least one 2-copula $C_\delta$ with that diagonal.

Let $S_4'$ denote the set of all proper 2-quasi-copulas sharing a fixed diagonal section $\delta$---different from $\delta(t)=t$---, i.e., $\mathcal{Q}_\delta^2 \setminus \mathcal{C}_\delta^2$. Then we have the following result.

\begin{theorem}
The set $S_4'$ is maximal spaceable convex lineable.
\end{theorem}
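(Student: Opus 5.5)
The plan is to follow the template of Theorem~\ref{th:S4}. I would build an affine, injective, Lipschitz map $\Phi_\delta:\mathcal{C}^2\to\mathcal{Q}^2_\delta\setminus\mathcal{C}^2_\delta$ that preserves convex combinations. Compactness of $(\mathcal{C}^2,d_\infty)$ then makes $\Phi_\delta(\mathcal{C}^2)$ closed, and affinity makes it convex. It remains to push forward a family of $\mathfrak{c}$ linearly independent copulas, for instance the mutually singular shuffles $\{C_\theta\}$ from the proof of Theorem~\ref{th:s2}.

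\emph{Step 1: a base element.} Since $\delta(t)\neq t$, continuity gives an interval $]t_1,t_2[$ on which $\delta(t)<t$. On the part of $[0,1]^2$ near the diagonal over that interval there is room to place negative mass. I would use this to fix a proper quasi-copula $Q_p\in\mathcal{Q}^2_\delta\setminus\mathcal{C}^2_\delta$ together with a box $B_0$ such that $V_{Q_p}(B_0)<0$. This can be done either by citing the known existence result for proper quasi-copulas with a prescribed non-identity diagonal, or by a patchwork modelled on \eqref{eq:QC} and placed near the diagonal. Next I would mix, $Q_0=\lambda Q_p+(1-\lambda)C_\delta$ with $C_\delta\in\mathcal{C}^2_\delta$, where $C_\delta$ is chosen so that on some box $R=[a,a+s]\times[c,c+s]$ its partial derivatives lie in $[\eta,1-\eta]$ for some $\eta>0$. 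Here $R$ is required to be disjoint from both the diagonal and $B_0$. Then $Q_0$ still has diagonal $\delta$ and still satisfies $V_{Q_0}(B_0)<0$. Provided the partials of $Q_p$ on $R$ do not spoil the bounds, $Q_0$ also has slack $\eta'>0$ in its partials on $R$.

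\emph{Step 2: the perturbation.} For $C\in\mathcal{C}^2$ I would define
\[
Q_C(u,v)=Q_0(u,v)+\varepsilon s\Big(C\big(\tfrac{u-a}{s},\tfrac{v-c}{s}\big)-\tfrac{u-a}{s}\cdot\tfrac{v-c}{s}\Big)\quad\text{on } R,
\]
and set $Q_C=Q_0$ elsewhere. Because $C-\Pi^2$ vanishes on the boundary of $[0,1]^2$, the function $Q_C$ is continuous. The partials of the added term are bounded by $\varepsilon$, so taking $\varepsilon<\eta'$ keeps $Q_C$ increasing and $1$-Lipschitz in each variable. The boundary conditions are untouched, and since $R$ avoids the diagonal, $\delta_{Q_C}=\delta$. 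Since $R$ also avoids $B_0$, we get $V_{Q_C}(B_0)=V_{Q_0}(B_0)<0$, so $Q_C$ is proper. The map $C\mapsto Q_C$ is affine and satisfies $d_\infty(Q_{C_1},Q_{C_2})=\varepsilon s\, d_\infty(C_1,C_2)$, so its image is compact and convex.

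\emph{Step 3: linear independence.} Suppose $\sum\beta_iQ_{C_i}\equiv0$. Evaluating at $(1,1)$ gives $\sum\beta_i=0$, so the $Q_0$ and $\Pi^2$ contributions cancel. What remains is $\sum\beta_iC_i\equiv0$, and hence $\beta_i=0$ by the independence of the chosen copulas. The main obstacle is Step~1: securing the off-diagonal box $R$ on which $Q_0$ has uniform slack in both partial derivatives. Copulas with diagonal $\delta$ can be very rigid, for example when $\delta$ agrees with $W$'s diagonal on part of the interval. To handle this I would first try the Bertino or Fredricks--Nelsen copulas, or a convex mixture of $C_\delta$ with its transpose and with $Q_p$, and locate $R$ where the mass is spread out. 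Failing that, I would place $R$ inside a region where $\delta(t)<t$ forces a positive, non-degenerate mass density.
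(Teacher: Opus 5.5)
Your Steps 2 and 3 are sound once Step 1 is in place. Step 1, however, is a genuine gap, exactly at the point you flag. Nothing in the proposal produces an off-diagonal box $R$ on which $Q_0$ has both partial derivatives in $[\eta',1-\eta']$, and for some diagonals none of the sources you suggest can produce one. The mixing is not the problem: if $C_\delta$ has slack $\eta$ on $R$, a mixture with weight $w<1$ on $Q_p$ has slack $(1-w)\eta$ whatever $Q_p$ does, and $w$ close to $1$ keeps $V_{Q_0}(B_0)<0$. The problem is $C_\delta$.

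Let $A\subset[0,1]$ be measurable, meeting every nondegenerate interval in positive Lebesgue measure $\lambda$, with $\lambda(A\cap[0,t])\le t/2$ for all $t$ and $\lambda(A)=1/2$ (a standard fat-Cantor-type construction). Put $\delta(t)=2\lambda(A\cap[0,t])$; this is a diagonal different from the identity. For any $C\in\mathcal{C}^2_\delta$,
\[
\delta(t+h)-\delta(t)=\mu_C\big([t,t+h]\times[0,t+h]\big)+\mu_C\big([0,t]\times[t,t+h]\big),
\]
and each summand is at most $h$. So wherever $\delta'(t)=2$ one gets $\mu_C([t,t+h]\times\,]t+h,1])=o(h)$. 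Lebesgue differentiation then yields, for a.e.\ $u\in A$, that $K_C(u,[0,u])=1$, hence $\partial_1C(u,v)=1$ for all $v\ge u$.

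Consequently every box above the diagonal contains a set of positive measure on which $\partial_1C=1$. Symmetrically, every box below it contains one on which $\partial_2C=1$. This applies to Bertino and Fredricks--Nelsen copulas, to transposes, and to all their mixtures, since these are again copulas with diagonal $\delta$. Your fallback fails too: $\delta(t)<t$ only forces mass $t-\delta(t)$ in $[0,t]\times[t,1]$, and that mass may be singular. That leaves $Q_p$, and you have no control over its local behaviour.

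The missing idea, which is what the paper uses, is to measure the available room in the coordinates of $\mu_C$ rather than in Lebesgue coordinates; then no slack is needed at all. Fix $t_0$ with $\delta(t_0)<t_0$, $C\in\mathcal{C}^2_\delta$, and $R=[0,t_0]\times[t_0,1]$, so that $V_C(R)=t_0-\delta(t_0)>0$. Let $F(u)=V_C([0,u]\times[t_0,1])/V_C(R)$ and $G(v)=V_C([0,t_0]\times[t_0,v])/V_C(R)$; these are continuous, increasing and onto $[0,1]$.

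Overwriting $C$ on $R$ by $V_C(R)\,Q(F(u),G(v))+C(u,t_0)$ gives a quasi-copula with diagonal $\delta$ for \emph{every} quasi-copula $Q$:
\begin{itemize}
\item It agrees with $C$ on $\partial R$.
\item Its $u$-increments on $R$ lie between $0$ and $V_C([u,u']\times[t_0,1])+V_C([u,u']\times[0,t_0])=u'-u$, and similarly in $v$.
\item It is proper whenever $Q$ is, because boxes pull back through $(F,G)$ with volumes multiplied by $V_C(R)$.
\end{itemize}

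Inserting the compact convex family $\{Q_{C'}:C'\in\mathcal{C}^2\}$ of \eqref{eq:QC} gives an affine map with Lipschitz constant $V_C(R)$. Your Step 3 argument (evaluate at $(1,1)\notin R$, then cancel) shows it preserves linear independence. In short, replace the small Lebesgue-rescaled perturbation of $Q_0$ by a complete replacement of the mass of $C$ on $R$, written in the coordinates $(F,G)$.
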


\begin{proof}
We use the construction given in \cite{Durante11}. Fix a diagonal $\delta$ such that there exists $t_0\in(0,1)$ with $\delta(t_0)\ne t_0$. Let $C\in\mathcal{C}_\delta^2$ be a 2-copula having that diagonal, and define the rectangle 
$R=[0,t_0]\times[t_0,1]$.

For any given proper quasi-copula $Q$ on $[0,1]^2$, define
\begin{equation}\label{eq:Ctilde}
\widetilde{C}(u,v) =
\begin{cases}
    V_C(R)\cdot Q(F(u),G(v)) + C(u,t_0), & (u,v)\in R,\\[3pt]
    C(u,v), & \text{otherwise},
\end{cases}
\end{equation}
where the rescaling functions $F,G:[0,1]\to[0,1]$ are given by
\[
F(u) = \frac{V_C([0,u]\times[t_0,1])}{ V_C(R)},
\qquad
G(v) = \frac{V_C([0,t_0]\times[t_0,v])}{ V_C(R)}.
\]
It follows from the properties of quasi-copulas (see \cite[Proposition 3]{Durante11}) that $\widetilde{C}$ is a quasi-copula sharing the same diagonal as $C$, and $\widetilde{C}$ fails to be a copula whenever $Q$ is proper. Hence, $\widetilde{C}\in \mathcal{Q}_\delta^2 \setminus \mathcal{C}_\delta^2$.

Let $L=\Psi \left( \Phi \left( \mathcal{C}^{2}\right) \right) \subset \mathcal{Q}
^{n}\setminus \mathcal{C}^{n}$---recall the proof of Theorem \ref{th:S4}---, and let $\Omega$ be the map defined as follows
\[
\begin{split}
 \Omega:L&\longrightarrow \mathcal{Q}^2 \\
 Q&\longmapsto \widetilde{C}_Q
\end{split}
\]
This map is continuous, ideed it is Lipschitz, because 
\begin{align*}
    d_\infty(\Omega(Q_1),\Omega(Q_2))&=\sup_{(u,v)\in[0,1]^2}|\widetilde{C}_{Q_1}(u,v)-\widetilde{C}_{Q_2}(u,v)|\\
    &=\sup_{(u,v)\in R}|V_C(R)Q_1(F(u),G(v))-C(u,t_0)-V_C(R)Q_2(F(u),G(v))+C(u,t_0)|\\
    &=\sup_{(u,v)\in R}|V_C(R)(Q_1(F(u),G(v))-Q_2(F(u),G(v))|\\
    &=V_C(R)\sup_{(u,v)\in R}|Q_1(F(u),G(v))-Q_2(F(u),G(v))|\\
    &=V_C(R)\sup_{(u,v)\in [0,1]^2}|Q_1(u,v)-Q_2(u,v)|\\
    &=V_C(R)\cdot d_\infty(Q_1,Q_2).
\end{align*}
So $\Omega(L)$ is closed. Now, we show that $\Omega(\lambda Q_1+(1-\lambda)Q_2 )=\lambda\Omega(Q_1)+(1-\lambda)\Omega(Q_2)$ for $\lambda\in[0,1]$; but it is clear because if $(u,v)\notin R$, then $\Omega(Q_1)(u,v)=\Omega(Q_2)(u,v)=C(u,v)$, and if $(u,v)\in R$, then  
\begin{align*}
    \Omega(\lambda Q_1+(1-\lambda)Q_2)(u,v)&=V_C(R)(\lambda Q_1+(1-\lambda)Q_2)(F(u),G(v))+C(u,t_0)\\
    &=\lambda V_C(R)Q_1(F(u),G(v))+(1-\lambda)V_C(R)Q_2(F(u),G(v))+ (\lambda+1-\lambda)C(u,t_0)\\
    &=\lambda\Omega(Q_1)(u,v)+(1-\lambda)\Omega(Q_2)(u,v).
\end{align*}
$\Omega$ also preserves linearly independent vectors, so  $\mathcal{Q}_\delta^2\setminus\mathcal{C}_\delta^2$ is maximal spaceable convex lineable.
\end{proof}

\subsection{Sequences of copulas}

The study of the space of copulas $\mathcal{C}^2$ often relies on various topologies. In general settings, these topologies are not equivalent, leading to the creation of sets composed of sequences that converge under one metric but diverge under another.

\subsubsection{Sequences of copulas converging in $d_\infty$ but not converging in $D_1$}

We recall that the standard {\it uniform metric} $d_\infty$ on $\mathcal{C}^2$ is defined by
\[
d_\infty(C_1, C_2) := \max_{(x,y) \in [0,1]^2} |C_1(x, y) - C_2(x, y)|. \tag{2.1}
\]
It is well known that the metric space $(\mathcal{C}, d_\infty)$ is compact and that pointwise and uniform convergence of a sequence of copulas $(C_n)_{n\in\mathbb{N}}$ are equivalent (see \cite{Durante2016book}).

On the other hand, for the metric $D_1$ we need some preliminary concepts.

\begin{definition}
    A Markov Kernel from $\mathbb{R}$ to $\mathcal{B}(\mathbb{R})$, the Borel $\sigma$-field on $\mathbb{R}$, is a mapping $K:\mathbb{R}\times\mathcal{B}(\mathbb{R})\longrightarrow[0,1]$ such that $x\mapsto K(x,B)$ is measurable for every fixed $B\in\mathcal{B}(\mathbb{R})$ and $B\mapsto K(x,B)$ is a probability measure for every fixed $x\in\mathbb{R}$. 
\end{definition}

Given two random variables $X$ and $Y$ in a probability space $(\Omega,\mathcal{A},\mathbb{P})$, a Markov kernel, $K$, is called regular conditional distribution of $Y$ given $X$ if for every $B\in\mathcal{B}(\mathbb{R})$
$$K(X(\omega),B)=\mathbb{E}(\mathds{1}_B\circ Y|X)(\omega)$$
holds $\mathbb{P}$-a.e. If $C\in\mathcal{C}^2$ is the copula associated with $X$ and $Y$, $E,F\in\mathcal{B}([0,1])$, then 
$$\int_FK_C(u,E)d\lambda(u)=\mu_C(F\times E),$$
where $K_C$ denotes the regular conditional distribution of $Y$ given $X$ and $\mu_C$ is the probability measure associated with $C$.

In $\mathcal{C}^2$, the metric $D_1$ is defined by 
$$D_1(C_1,C_2):=\iint_{[0,1]^2}|K_{C_1}(u,[0,v])-K_{C_2}(u,[0,v])|{\rm d}\lambda(u){\rm d}\lambda(v).$$
The metric space $(\mathcal{C}^2,D_1)$ is complete and separable (see \cite{Tru11}). 

Let $S_5$ be the family of sequences of copulas that converge under the $d_\infty$ metric but fail to converge under the $D_1$ metric (an example of a sequence of copulas in $S_5$ can be found in \cite[Example 5.1]{Ka18}).
\begin{remark}
    We recall that $(\mathcal{C}^2)^\mathbb{N}$---the set of all sequences of 2-copulas---is a subset of the vector space of the sequences of bounded functions on $[0,1]^\mathbb{N}$.
\end{remark}
In order to generalize the classical concept of shuffles of Min, in \cite{Durante10} two fundamental notions were introduced: the \emph{S-structure} and the \emph{S-copula}. An S-structure provides a systematic way to partition the unit interval for each marginal dimension, ensuring that the resulting multidimensional grid preserves measure consistency across coordinates. Based on such a structure, an S-copula (or shuffling copula) is constructed by redistributing, within each block of the partition, the probability mass of a family of copulas through suitable affine transformations. This framework encompasses both the multivariate shuffle of Min and the ordinal sum construction, and plays a central role in the approximation of arbitrary copulas under the $L^\infty$ norm.

\begin{definition}[S-structure]
Let $d \ge 2$. 
An \emph{S-structure} (or \emph{shuffling structure}) is a family 
$S = (J^i)_{i=1}^d$, where each $J^i = (J^i_n)_{n \in N}$ is a system of closed, non-empty subintervals of $[0,1]$, 
satisfying the following conditions:
\begin{itemize}
    \item[i)] $N$ is a finite or countable index set, i.e., $N = \{0,1,\ldots,\tilde{n}\}$ or $N = \mathbb{Z}_+$;
    \item[ii)] for every $i \in \{1,\ldots,d\}$ and all $n,m \in N$ with $n \neq m$, the intervals $J^i_n$ and $J^i_m$ have at most one endpoint in common;
    \item[iii)] for every $i \in \{1,\ldots,d\}$,
    $$\sum_{n \in N} \lambda(J^i_n) = 1,$$
    where $\lambda$ denotes the Lebesgue measure on $[0,1]$;
    \item[iv)] for every $n \in N$, all intervals $J^1_n, J^2_n, \ldots, J^d_n$ have the same length, i.e.,
    $$\lambda(J^1_n) = \lambda(J^2_n) = \cdots = \lambda(J^d_n).$$
\end{itemize}
The collection of all S-structures based on the index set $N$ is denoted by $\mathscr{S}_N$, and 
$\mathscr{S} = \bigcup_N \mathscr{S}_N$.
\end{definition}

\begin{definition}[S-copula]
An S-copula (short for shuffling copula) is a copula whose induced measure can be represented as a shuffling measure that is $d$-fold stochastic. Formally, an S-copula 
$$C = \langle (J^i)_{i=1}^d, (C_n)_{n \in N} \rangle$$
is obtained by partitioning the unit cube $[0,1]^d$ into blocks 
$J^1_n \times \cdots \times J^d_n$ (an \emph{S-structure}) and redistributing, within each block, 
the probability mass of a family of copulas $(C_n)_{n \in N}$ through suitable affine transformations. 
The resulting measure remains $d$-fold stochastic and therefore defines a valid copula.
\end{definition}
A particular case in which we are interested is obtained when there exists a copula $C$ such that $C_n=C$ for any $n\in N$. In this case, it is possible to define for any $S\in\mathscr{S}_N$ a map $\widetilde{\psi}_{(S,\cdot)}:\mathcal{C}^2\longrightarrow \mathcal{C}^2$, given by $\widetilde{\psi}_{(S,\cdot)}(C)=\left<S,(C_n=C)_{n\in N}\right>$.

We are now in position to prove the following result.

\begin{theorem}
   The set $S_5$ is maximal convex lineable.
\end{theorem}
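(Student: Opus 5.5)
We will build $\mathfrak{c}$ sequences in $S_5$ by combining one fixed sequence with the almost disjoint family already used in the proof of Theorem~\ref{th:S3}.

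Fix a sequence $(E_k)_{k\in\mathbb{N}}$ in $S_5$, for instance the one in \cite[Example 5.1]{Ka18}. It converges in $d_\infty$ to some copula $E$ but does not converge in $D_1$. Since $(\mathcal{C}^2,D_1)$ is complete, the sequence is then not $D_1$-Cauchy. We may also take a concrete shuffle-type sequence, e.g. $E_k=\widetilde{\psi}_{(S_k,\cdot)}(M^2)$ with $S_k$ ever finer S-structures, which tends to $\Pi^2$ in $d_\infty$ but stays $D_1$-far from $\Pi^2$. Then $E_k\to\Pi^2$ in $d_\infty$ while $D_1(E_k,\Pi^2)\ge\eta>0$ for all $k$. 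Such a sequence is not $D_1$-convergent, because $D_1$-convergence implies $d_\infty$-convergence to the same limit.

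Let $\{K_\alpha:\alpha\in[0,1]\}$ be an almost disjoint family of infinite subsets of $\mathbb{N}$. For each $\alpha$ define the sequence $X^\alpha=(X^\alpha_k)_k$ by $X^\alpha_k=E_k$ if $k\in K_\alpha$ and $X^\alpha_k=\Pi^2$ otherwise. Every $X^\alpha$ converges to $\Pi^2$ in $d_\infty$.

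Now take a convex combination $Y=\sum_{i=1}^m\lambda_iX^{\alpha_i}$ with all $\lambda_i>0$. Each term is a convex combination of $E_k$ and $\Pi^2$, so $Y_k\in\mathcal{C}^2$ and $Y_k\to\Pi^2$ in $d_\infty$.

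For $D_1$, note that conditional distributions mix linearly: $K_{\lambda A+(1-\lambda)B}=\lambda K_A+(1-\lambda)K_B$ a.e. Hence $D_1(\lambda A+(1-\lambda)\Pi^2,\Pi^2)=\lambda D_1(A,\Pi^2)$. By almost disjointness there are infinitely many $k\in K_{\alpha_1}\setminus\bigcup_{i\ge2}K_{\alpha_i}$. For those $k$,
\[
Y_k=\lambda_1E_k+(1-\lambda_1)\Pi^2,\qquad D_1(Y_k,\Pi^2)=\lambda_1D_1(E_k,\Pi^2)\ge\lambda_1\eta .
\]
So $Y$ cannot $D_1$-converge to $\Pi^2$, which is the only possible limit. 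Therefore $Y\in S_5$, and ${\rm conv}\{X^\alpha\}\subset S_5$.

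Linear independence follows the same way. Suppose $\sum\beta_iX^{\alpha_i}=0$. Choose $k$ as above and also one index $k'$ lying in none of the $K_{\alpha_i}$, which exists since the family is almost disjoint and infinite sets can be chosen with infinite complement. At $k'$ we get $(\sum\beta_i)\Pi^2=0$. At $k$ we get $\beta_1E_k+(\sum_{i\ge2}\beta_i)\Pi^2=0$, so $\beta_1(E_k-\Pi^2)=0$. Since $E_k\ne\Pi^2$, this gives $\beta_1=0$, and by symmetry every $\beta_i=0$.

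The family has cardinality $\mathfrak{c}$. This equals the cardinality of the ambient space of sequences of bounded functions, so $S_5$ is maximal convex lineable.

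The main obstacle is the existence of the fixed sequence with a uniform $D_1$ lower bound $\eta$ and $d_\infty$-limit $\Pi^2$. I would justify it via shuffles of Min approximating $\Pi^2$. Every shuffle $E$ of Min has $K_E(u,\cdot)$ a Dirac mass, so $D_1(E,\Pi^2)=\int_0^1\!\int_0^1|\mathds{1}_{[h(u),1]}(v)-v|\,dv\,du$ for a measure-preserving $h$. Each inner integral equals $\tfrac12\bigl(h(u)^2+(1-h(u))^2\bigr)\ge\tfrac14$, which gives $\eta=1/4$.
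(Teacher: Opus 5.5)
\paragraph{Verdict.} Your argument is correct, but it follows a different route from the paper's.

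\paragraph{The paper's route.} For each $N$ the paper builds an S-copula map $\Theta_N$: $N^2$ blocks of side $1/N^2$ arranged so that each $1/N\times 1/N$ cell contains one block, with a rescaled copy of $C$ placed in every block. It then shows that for \emph{every} $C$ one has $d_\infty(\Theta_N(C),\Pi^2)\to0$ and $D_1(\Theta_N(C),\Pi^2)\to 1/3$. This holds uniformly in $C$ because the kernel of $\Theta_N(C)$ at $u$ lives on an interval of length $1/N^2$. Finally it uses that $C\mapsto(\Theta_N(C))_N$ is affine and preserves linear independence. Hence the image of any family of $\mathfrak c$ linearly independent copulas lies in $S_5$ together with its convex hull.

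\paragraph{Your route.} You fix one sequence of shuffles of Min $E_k\to\Pi^2$ with the uniform gap $D_1(E_k,\Pi^2)\ge 1/4$. You interleave it with $\Pi^2$ along an almost disjoint family, the device the paper uses for $S_3$ and $S_6$. Linearity of Markov kernels then gives $D_1(\lambda_1E_k+(1-\lambda_1)\Pi^2,\Pi^2)=\lambda_1D_1(E_k,\Pi^2)$.

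\paragraph{What each buys.} Your approach is more elementary:
\begin{itemize}
\item it needs one explicit computation, $h^2/2+(1-h)^2/2\ge 1/4$, instead of a $C$-uniform estimate for an S-copula construction;
\item it needs no pre-existing family of $\mathfrak c$ linearly independent copulas, since the almost disjoint family supplies the cardinality;
\item linear independence reduces to $E_k\neq\Pi^2$.
\end{itemize}
The paper's approach yields more structure. An affine copy of all of $\mathcal C^2$ sits inside $S_5$, and every element has the same explicit asymptotics. In your family, $D_1$-non-convergence is witnessed only along a subsequence, with a gap $\lambda_1\eta$ that depends on the combination.

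\paragraph{Two small slips.}
\begin{itemize}
\item The auxiliary index $k'$ is unnecessary. Your justification of its existence is also not right: the evens and the odds each have infinite complement, yet their union is $\mathbb N$. Instead, evaluate $\beta_1E_k+\bigl(\sum_{i\ge2}\beta_i\bigr)\Pi^2=0$ at $(1,1)$ to get $\sum_{i\ge2}\beta_i=-\beta_1$ directly.
\item The space of sequences of bounded functions on $[0,1]^2$ has cardinality $2^{\mathfrak c}$, not $\mathfrak c$. To make ``maximal'' literally correct, take the ambient space to be $C([0,1]^2)^{\mathbb N}$, whose cardinality is $\mathfrak c^{\aleph_0}=\mathfrak c$. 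The paper tacitly uses this same convention.
\end{itemize}
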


\begin{proof}
For each integer $N\geq 2$, consider the equal-length partition of $[0,1]$
    $$I_n^1=\left[\frac{n-1}{N^2},\frac{n}{N^2}\right], \hspace{1cm}n=1,2,...,N^2,$$
    $$I_{N(j-1)+k}^2=\left[\frac{N(k-1)+j-1}{N^2},\frac{N(k-1)+j}{N^2}\right]\text{ for } k,j=1,2,...,N$$
(see \cite{Mi1992,Ne06}), and the corresponding blocks $I_n^1\times I_n^2$. This yields an $S$-structure $S_N'\in\mathcal{S}$ (see \cite{Durante10}).

We define $\Theta_N:\mathcal{C}^2\longrightarrow\mathcal{C}^2$ so that for a given copula $C\in\mathcal{C}^2$, $\Theta_N(C)$ is defined by placing on each block $I_n^1\times I_n^2$ an affine-rescaled copy of $C$ for $(u,v)\in I_n^1\times I_n^2$, with mass $1/N^2$. This is the $S$-copula associated to $S_N$ with constant block family $C$ (see \cite[Proposition 2.2]{Durante10}).
    
    Regarding the argument in \cite{Mi1992,Ne06}, it holds that
$$\lim_{N\to\infty}d_\infty(\Theta_N(C),\Pi^2)=0.$$
Thus $\Theta_N(C)$ converges uniformly to the independence copula $\Pi^2$.

On the other hand, we consider the equal-length partition of $[0,1]$
$$J_n^1=J_n^2=\left[\frac{n-1}{N^2},\frac{n}{N^2}\right],\hspace{1cm} n=1,2,...,N^2,$$
and the corresponding diagonal blocks $J_n^1\times J_n^2$. This yields an $S$-structure $S_N\in\mathcal{S}$ (see \cite{Durante10}).
    
Now, we fixed a copula $C\in\mathcal{C}^2$, $C\neq \Pi^2$, and we define $\Psi_N(C)$ by placing on each diagonal block $J_n^1\times J_n^2$ an affine-rescaled copy of $C$; that is, for $(u,v)\in J_n^1\times J_n^2$,
$$\Psi_N(C)(u,v)=\frac{n-1}{N^2}+\frac{1}{N^2}C(N^2u-(n-1),N^2v-(n-1)).$$
Since $\Pi^2$ is invariant under the action of generalized shuffles, it holds that $D_1(\Psi_N(C),\Pi^2)=D
    _1(\Theta_N(C),\Pi^2)$. Therefore, we will study $D_1(\Psi_N(C),\Pi^2):$
    \begin{align*}
D_1(\Psi_N(C),\Pi^2)&=\int\int_{[0,1]^2}|K_{\Psi_N(C)}(u,[0,v])-v|dvdu\\
&\geq\sum_{n=1}^{N^2}\left(\int\int_{\left[\frac{n-1}{N^2},\frac{n}{N^2}\right]\times\left[0,\frac{n-1}{N^2}\right]}v\hspace{0.1cm} dudv+\int\int_{ \left[\frac{n-1}{N^2},\frac{n}{N^2}\right]\times\left[\frac{n}{N^2},1\right]}(1-v) dudv\right)\\
        &=\frac{1}{N^2}\sum_{n=1}^{N^2}\left(\int_{\left[0,\frac{n-1}{N^2}\right]}v\hspace{0.1cm} dv+\int_{\left[\frac{n}{N^2},1\right]}(1-v) dv\right)\\
        &=\frac{2N^4-3N^2+1}{6N^4}.
    \end{align*}
    
        Additionally, 
        \begin{align*}
            D_1(\Psi_N(C),\Pi^2)=)&=\int\int_{[0,1]^2}|K_{\Psi_N(C)}(u,[0,v])-v|dvdu\\
        &\leq\sum_{n=1}^{N^2}\left(\int\int_{\left[\frac{n-1}{N^2},\frac{n}{N^2}\right]\times\left[0,\frac{n-1}{N^2}\right]}v\hspace{0.1cm} dudv+\int\int_{ \left[\frac{n-1}{N^2},\frac{n}{N^2}\right]\times\left[\frac{n}{N^2},1\right]}(1-v) dudv\right.\\
        &\left.+\int\int_{ \left[\frac{n-1}{N^2},\frac{n}{N^2}\right]^2}1\hspace{0.1cm} dudv\right)\\
        &=\frac{1}{N^2}\sum_{n=1}^{N^2}\left(\int_{\left[0,\frac{n-1}{N^2}\right]}v\hspace{0.1cm} dv+\int_{\left[\frac{n}{N^2},1\right]}(1-v)dv+\frac{1}{N^2}\right)\\
        &=\frac{2N^4+3N^2+1}{6N^4}.
        \end{align*}
    Therefore, 
    $$\lim_{N\to\infty}D_1(\Psi_N(C),\Pi^2)=\frac{1}{3}.$$
    Then,
    $$\lim_{N\to\infty}D_1(\Theta_N(C),\Pi^2)=\frac{1}{3}.$$
    Given that the mapping assigns to each copula $C$ an affine-rescaled copy of it in every block $I^1_n\times I_n^2$, it is evident that $\Theta_N(\lambda C_1+(1-\lambda)C_2)=\lambda\Theta_N(C_1)+(1-\lambda)\Theta_N(C_2)$, for every $\lambda\in[0,1]$ and then it preserves linearly independent vectors. Thus, $S_5$ is maximal convex lineable.
\end{proof}

\subsubsection{Sequences of copulas converging in $D_1$ but not converging in total variation}

Let $(\Omega, \mathcal{A})$ be a measurable space and let $\mu$ be a signed measure on $(\Omega, \mathcal{A})$. The \textit{total variation norm} of $\mu$, denoted by $\Vert \mu \Vert_{\rm var}$, is defined as the total mass of the absolute value measure $|\mu|$, which is given by:
\[
\Vert \mu \Vert_{\rm var} := |\mu|(\Omega) = \sup \left\{ \sum_{i=1}^\infty |\mu(A_i)| \right\},
\]
where the supremum is taken over all countable partitions $\{A_i\}_{i=1}^\infty$ of $\Omega$ into disjoint measurable sets $A_i \in \mathcal{A}$. Alternatively, by the Hahn Decomposition Theorem, $\Omega$ can be partitioned into a positive set $P$ and a negative set $N$ such that $\mu = \mu^+ - \mu^-$. The (total) variation norm is then the sum of the norms of the positive and negative parts:
\[
\Vert \mu \Vert_{\rm var} = \mu^+(\Omega) + \mu^-(\Omega) = \mu(P) - \mu(N).
\]
(see, e.g., \cite{Royden1988}).

A sequence of 2-copulas $\{C_k\}_{k\in\mathbb{N}}$ is said to converge to a limit 2-copula $C$ in (total) variation if the distance between their associated measures, measured by the $L_1$ norm of their densities---when they exist---, tends to zero:
$$
\lim_{k \to \infty} \Vert \mu_{C_k} - \mu_{C} \Vert_{\rm var} = 0.
$$
Probabilistically, this means that the sequence $\{C_k\}$ becomes indistinguishable from the limit $C$ in all respects, as the difference between their probability assignments over any measurable set vanishes.

Convergence in (total) variation is strictly stronger than both the uniform metric $d_{\infty}$ and the metric $D_1$. Let $S_6$ be the family of sequences of copulas that converge under the $D_1$ metric but fail to converge in total variation. An example of a sequence of copulas in $S_6$ is the following:

\begin{example}\label{ex:variation}
Let \(D\) denote the independence copula \(\Pi(u,v)=uv\) (we will use its kernel \(K(x,[0,y])=y\)).
For each integer \(n\ge1\) define the 2-copulas
\begin{equation}\label{eq:variation}
C_n(u,v)
= uv + \frac{(1 - \cos(2\pi u))(1 - \cos(2\pi n v))}{8\pi^2 n},
\quad (u,v)\in[0,1]^2.
\end{equation}
with densities on \([0,1]^2\)
\begin{equation}\label{eq:density}
c_n(u,v)=1+\frac{1}{2}\,\sin(2\pi u)\,\sin(2\pi n v),\quad (u,v)\in[0,1]^2.
\end{equation}
Set \(C_n\) to be the copula whose density w.r.t.\ Lebesgue measure is \(c_n\) (it is straightforward to check
that \(c_n\ge 1/2>0\) so it is a bona fide density and, because the \(y\)-integral of \(\sin(2\pi n y)\) is \(0\),
both marginals are uniform and \(C_n\) is a copula).

Let \(C\) be the independence copula (with kernel \(K(x,[0,y])=y\)). We show:
\begin{enumerate}
  \item \(D_1(C_n,C)\to0\) as \(n\to\infty\);
  \item the associated measures do not converge in total variation: \(\|\mu_{C_n}-\mu_C\|_{\mathrm{var}}\not\to0\).
\end{enumerate}

For an absolutely continuous copula with density \(c(x,y)\) the Markov kernel satisfies
\[
K(x,[0,y])=\int_0^y c(x,t)\,{\rm d}t.
\]
Hence for our \(c_n\),
\[
K_n(x,[0,y])-K(x,[0,y])
= \int_0^y \frac{1}{2}\sin(2\pi x)\sin(2\pi n t)\,{\rm d}t
= \frac{\sin(2\pi x)}{4\pi n}\big(1-\cos(2\pi n y)\big).
\]
Therefore, using the definition
\[
D_1(C_n,C)=\iint_{[0,1]^2}\big|K_n(x,[0,y])-K(x,[0,y])\big|\,{\rm d}x\,{\rm d}y,
\]
we obtain the uniform bound
\[
\big|K_n(x,[0,y])-K(x,[0,y])\big|
\le \frac{|\sin(2\pi x)|}{2\pi n},
\]
and hence
\[
D_1(C_n,C)\le \frac{1}{2\pi n}\int_0^1 |\sin(2\pi x)|\,{\rm d}x
= \frac{1}{2\pi n}\cdot\frac{2}{\pi}
= \frac{1}{\pi^2 n}\longrightarrow 0.
\]
Thus \(C_n\to C\) in the metric \(D_1\).

On the other side, the total variation distance between the two probability measures \(\mu_{C_n}\) and \(\mu_C\) equals
the \(L_1\)-distance of their densities:
\[
\|\mu_{C_n}-\mu_C\|_{\mathrm{var}}
= \int_{[0,1]^2} |c_n(x,y)-1|\,{\rm d}x\,{\rm d}y
= \frac{1}{2}\int_0^1 |\sin(2\pi x)|\,{\rm d}x\;\cdot\;\int_0^1 |\sin(2\pi n y)|\,{\rm d}y.
\]
Each factor equals \(\int_0^1 |\sin(2\pi t)|\,{\rm d}t = 2/\pi\), so
\[
\|\mu_{C_n}-\mu_C\|_{\mathrm{var}}
= \frac{1}{2}\cdot\frac{2}{\pi}\cdot\frac{2}{\pi}
= \frac{2}{\pi^2} > 0,
\]
independently of \(n\). In particular, the variation distance does \emph{not} tend to \(0\).
\end{example}

The set $S_6$ possesses the maximal convex lineable structure as the following result shows.

\begin{theorem}
   The set $S_6$ is maximal convex lineable.
\end{theorem}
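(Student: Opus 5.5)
We build a family of $\mathfrak{c}$ linearly independent sequences in $(\mathcal{C}^2)^{\mathbb{N}}$, modelled on Example~\ref{ex:variation}, and check that every convex combination of them lies in $S_6$. For a parameter $t\in(0,1]$ (or, if needed, a parameter ranging over a set of cardinality $\mathfrak{c}$) let the $n$-th term of the $t$-th sequence have density
\[
c_{n}^{(t)}(u,v)=1+\tfrac12\,\sin(2\pi u)\,\sin(2\pi n v)\,\phi_t(u),
\]
or, more simply, take the sequences $\mathbf{C}^{(t)}=(C^{(t)}_n)_{n\in\mathbb{N}}$ with
\[
C_n^{(t)}=(1-t)\,\Pi^2+t\,C_n,
\]
where $C_n$ is given by \eqref{eq:variation}. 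I would rather use a family where the $u$-dependence varies with $t$, for instance $\sin(2\pi k u)$ with $k$ in an almost disjoint or indexed family. The cleanest choice is
\[
c_n^{(\alpha)}(u,v)=1+\tfrac14\bigl(\sin(2\pi u)+\alpha\sin(4\pi u)\bigr)\sin(2\pi n v),\qquad \alpha\in[0,1].
\]
Its density stays $\ge 1/2$, and its $v$-integrals over $[0,1]$ vanish, so both marginals are uniform and each $C_n^{(\alpha)}$ is a copula. The subtlety is linear independence, since all these sequences are affine in $\alpha$. Hence I would instead use $g_\alpha(u)$ ranging over a linearly independent family of continuous functions with $\int_0^1 g_\alpha=0$ and $|g_\alpha|\le 1$, for example $g_\alpha(u)=\sin(2\pi u)\,e^{\alpha u}/e$, and set
\[
c_n^{(\alpha)}(u,v)=1+\tfrac12\, g_\alpha(u)\sin(2\pi n v).
\]
Note that $\int_0^1 g_\alpha=0$ is not required for the $u$-marginal, since the $v$-integral already kills the perturbation; for the $v$-marginal we need $\int_0^1 g_\alpha=0$. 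So I would take $g_\alpha(u)=h_\alpha(u)-\int_0^1 h_\alpha$ with $h_\alpha(u)=e^{\alpha u}$, suitably scaled so that $|g_\alpha|\le1$.

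Next, a convex combination $\sum_i\lambda_i\mathbf{C}^{(\alpha_i)}$ is the sequence with densities $1+\tfrac12 g(u)\sin(2\pi n v)$, where $g=\sum_i\lambda_i g_{\alpha_i}$ is again continuous, bounded by $1$, and has mean zero. Repeating the computation of Example~\ref{ex:variation} gives
\[
\bigl|K_n(x,[0,y])-y\bigr|\le \frac{|g(x)|}{2\pi n},
\]
so the combined sequence converges in $D_1$ to $\Pi^2$. Similarly,
\[
\|\mu_{C_n}-\mu_{\Pi^2}\|_{\rm var}=\tfrac12\|g\|_{L^1}\cdot\tfrac{2}{\pi}.
\]
Also, $\mu_{C_n}$ does not converge in variation to any other limit either: convergence in variation implies convergence in $D_1$ (as noted above), so the only possible limit is $\Pi^2$. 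Thus the combination lies in $S_6$ provided $g\not\equiv0$.

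The main obstacle is exactly this non-vanishing of $g$, which is where linear independence of the $g_\alpha$ enters. I would use the family $h_\alpha(u)=e^{\alpha u}$, $\alpha\in(0,1]$, which is linearly independent and does not contain constants, so the centered functions $g_\alpha$ remain linearly independent. Consequently any nontrivial combination, in particular any convex combination, is nonzero.

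Linear independence of the sequences then follows as well. If $\sum_i\beta_i\mathbf{C}^{(\alpha_i)}=0$ termwise, differentiate the first term in both variables and use $\sum_i\beta_i c_1^{(\alpha_i)}=0$. Integrating out the part with $\sin$, or evaluating at $u=1$ first, gives $\sum_i\beta_i=0$, and then $\sum_i\beta_i g_{\alpha_i}=0$, so all $\beta_i=0$. The index set has cardinality $\mathfrak{c}$, which is the cardinality of the ambient sequence space, so $S_6$ is maximal convex lineable.
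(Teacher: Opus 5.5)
Your final construction is correct, and it takes a genuinely different route from the paper's.

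\textbf{The paper's route.} It keeps the single sequence $(C_n)$ of Example~\ref{ex:variation} and takes an almost disjoint family $\{K_\alpha\}$ of infinite subsets of $\mathbb{N}$. The $\alpha$-th sequence carries $C_n$ when $n\in K_\alpha$ and $\Pi^2$ otherwise. Linear independence is read off at indices $n\in K_{\alpha_1}\setminus\bigcup_{i\ge 2}K_{\alpha_i}$. For a convex combination, failure of variation convergence must be seen along a subsequence, e.g.\ $n\in K_{\alpha_i}$ with $\lambda_i>0$, where the term lies at variation distance at least $2\lambda_i/\pi^2$ from $\Pi^2$.

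\textbf{Your route.} You vary the $u$-profile of the perturbation instead. Every term of every sequence is perturbed, and the class of densities $1+\frac12 g(u)\sin(2\pi nv)$ is closed under convex combinations, with $g=\sum_i\lambda_i g_{\alpha_i}$. This gives several simplifications:
\begin{itemize}
\item Linear independence is settled by the first term alone and reduces to that of $\{1\}\cup\{e^{\alpha u}:\alpha\in(0,1]\}$.
\item Both distances are explicit for every $n$: $D_1(C_n,\Pi^2)\le\|g\|_{L^1}/(2\pi n)$ and $\|\mu_{C_n}-\mu_{\Pi^2}\|_{\rm var}=\|g\|_{L^1}/\pi$. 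The latter is positive because $g$ is continuous and, by linear independence, nonzero.
\item You explicitly exclude a variation limit other than $\Pi^2$ via $D_1\le\|\cdot\|_{\rm var}$, a step the paper leaves implicit.
\item You avoid the combinatorial bookkeeping that the paper's write-up treats loosely. Almost disjoint sets need not be disjoint, and each $C_{K_\alpha,n}$ equals $\Pi^2$ infinitely often, not finitely often.
\end{itemize}
In exchange, the paper's device needs only one offending sequence. It is the same template used for $S_3$, so it transfers to any tail property that survives mixing offending and neutral terms.

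\textbf{Write-up.} Keep only the centered exponentials $g_\alpha=\frac12\bigl(e^{\alpha u}-\int_0^1 e^{\alpha t}\,dt\bigr)$, which satisfy $|g_\alpha|\le\frac12(e-1)<1$. Delete the false starts:
\begin{itemize}
\item $(1-t)\Pi^2+tC_n$ and the $\sin(2\pi u)+\alpha\sin(4\pi u)$ family each span only a two-dimensional space.
\item $\sin(2\pi u)e^{\alpha u}$ does not have mean zero.
\end{itemize}
Finally, your claim that $\mathfrak{c}$ is the cardinality of the ambient space is the same identification the paper makes. It is literally true if the ambient space is taken to be sequences of continuous functions on $[0,1]^2$, of cardinality $\mathfrak{c}^{\aleph_0}=\mathfrak{c}$.
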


\begin{proof}
    In order to prove the statement of the theorem, we will use the sequence $\{C_n\}_{n\in\mathbb{N}}$ in Example \ref{ex:variation}. Let $\{0,1\}^\mathbb{N}$ be the set of sequences whose components are $0$ and $1$. Let $\{K_\alpha:\alpha\in[0,1]\}$ be an almost disjoint family of infinite subsets of $\mathbb{N}$. We consider the sequence $\{C_{K_\alpha,n}\}_n\in\mathbb{N}$ such that 
    $$C_{K_\alpha,n}=\begin{cases}
        C_n, & \text{ if }  n\in K_\alpha,\\
        \Pi^2, & \text{ if } n\notin K_\alpha.
    \end{cases}$$
    Now, we show that these copulas are linearly independent: suppose that these copulas are not linearly independent, so there exists $\beta_i\in\mathbb{R}$ for $i=1,\ldots,k$ , with some $\beta_i\neq 0$ such that
    $$\sum_{i=1}^k\beta_i\{C_{K_{\alpha_i},n}\}\equiv 0.$$
    There are infinitely many values of $n$ satisfying $K_{\alpha_1}(n)=1$ and $K_{\alpha_i}(n)=0$ for $i=2,...,k$. It would then follow that by equating the terms of the sequence term by term, we obtain $\beta_1 C_n=\beta \Pi^2$, but these two copulas are not equal for every $(u,v)\in[0,1]^2$. So the sequences $\{C_{K_\alpha,n}\}_n\in\mathbb{N}$ are linearly independent and there are $\mathfrak{c}$ of them, since $\alpha\in[0,1]$.

    From Example \ref{ex:variation}, it holds that
    $$\lim_{n\to\infty}D_1(C_{K_{\alpha},n},\Pi^2)=0,$$
    since just for a finite subset of $\mathbb{N}$, $C_{K_{\alpha},n}=\Pi^2$. It is also satisfied that convex combinations of these copulas converge to $\Pi^2$ in the metric $D_1$. As $K_{\alpha_1}\cap K_{\alpha_2}=\emptyset$, then $\lambda C_{K_{\alpha_1},n}+(1- \lambda)C_{K_{\alpha_2},n}=C_n$, for $\lambda\in[0,1]$, whenever $n\to\infty$. 

    On the other hand, applying similar arguments as above
    $$\lim_{n\to\infty}\|\mu_{C_{K_\alpha},n}-\mu_{\Pi^2}\|_{\mathrm{var}}=\lim_{n\to\infty}\|\mu_{C_n}-\mu_{\Pi^2}\|_\mathrm{var}>0.$$
    By the same reasoning as before, we deduce that convex combinations of these copulas do not converge to $\Pi^2$ either. So $S_6$ is maximal convex lineable.
\end{proof}

\section{Conclusion}\label{sec:conc}

In this work, we have studied the algebraic and topological largeness of several families of copulas and quasi-copulas through the notions of convex lineability and convex spaceability. Our results show that many classical and recently investigated subclasses of $n$-copulas exhibit a surprisingly rich internal structure. We proved that a number of these families are maximal convex lineable, meaning that they contain linearly independent sets of cardinality $\mathfrak{c}$ whose entire convex hull remains inside the class. This holds, for instance, for copulas with prescribed fractal support, copulas with bounded Lipschitz-type constraints, and sequences of copulas displaying different convergence behaviors depending on the chosen topology. For other important classes, convex lineability can be strengthened to maximal spaceable convex lineability. This is the case for the family of asymmetric copulas, copulas with maximal asymmetry, proper quasi-copulas, and proper 2-quasi-copulas with a fixed diagonal section. In these situations we identified closed, convex, infinite linearly independent subsets, thereby providing a significantly deeper structural understanding of the underlying families.

On the other hand, in certain natural settings convex spaceability remains unknown. This happens, for example, for copulas with fractal support or for 1-$p$-Lipschitz copulas that are not 1-$p'$-Lipschitz for any $p'>p$. In these contexts our constructions guarantee maximal convex lineability, but the lack of closure of the linear span prevents us from asserting spaceability. These examples highlight subtle obstructions caused by ordinal-sum mechanisms, mutual singularity of induced measures, or discretized block constructions.

The results obtained here open several lines of research. Among the most relevant are: (i) determining whether the families for which convex spaceability remains open actually contain closed infinite linearly independent subsets; (ii) characterizing the stability of convex lineable and spaceable structures under operations such as composition or iteration of shuffles; (iii) study the convex lineability and spaceability for other sets of copulas, e.g., copulas with hairpin supports \cite{Chamizo2021}; and (iv) extending the analysis to higher-dimensional dependence structures and alternative metrics.

\bigskip

\noindent
{\bf Acknowledgments}
\medskip

The first author is supported by the CDTIME of the University of Almería (Spain).

\end{document}